\newtheorem{theorem}{Theorem}[section]
\newtheorem{corollary}{Corollary}
\newtheorem{remark}{Remark}
\newcommand{\mc}[1]{{\mathcal #1}}
\newcommand{\mb}[1]{{\mathbf #1}}
\begin{document}

\title[Diffusion and transport on curves]{On diffusion and transport acting on parameterized moving closed curves in space}

\author{Michal Bene\v{s}${}^{1}$}
\author{Miroslav Kol\'a\v{r}${}^{1}$}
\address{${}^{1}$ Department of Mathematics, Faculty of Nuclear Sciences and Physical Engineering Czech Technical University in Prague, Trojanova 13, Prague, 12000, Czech Republic}
\author{Daniel \v{S}ev\v{c}ovi\v{c}${}^{2}$}
\address{${}^{2}$ Department of Applied Mathematics and Statistics, Faculty of Mathematics Physics and Informatics, Comenius University, Mlynsk\'a dolina, 842 48, Bratislava, Slovakia. Corresponding author: {\tt sevcovic@fmph.uniba.sk} }

\begin{abstract}
We investigate the motion of closed smooth curves that evolve in space $\mathbb{R}^3$. The governing evolutionary equation for the evolution of the curve is accompanied by a parabolic equation for the scalar quantity evaluated over the evolving curve. 
We apply the direct Lagrangian approach to describe the flow of 3D curves, resulting in a system of degenerate parabolic equations.
We prove the local existence and uniqueness of classical H\"older smooth solutions to the governing system of nonlinear parabolic equations.  A numerical discretization scheme is constructed using the method of flowing finite volumes. We present several numerical examples of the evolution of curves in 3D with a scalar quantity. We consider the flow of curves with zero torsion evolving in rotating and parallel planes. Next, we present examples of the evolution of curves with initially knotted and unknotted curves.

\medskip
\noindent
2010 MSC. Primary: 35K57, 35K65, 65N40, 65M08; Secondary: 53C80.

\noindent Key words and phrases. Curvature and binormal driven flow, non-local flow, Biot-Savart law, analytic semi-flows, H\"older smooth solutions, flowing finite volume method, knotted curves evolution. 

\end{abstract}

\maketitle

\section{Introduction}

The problem of distribution of scalar quantities along moving curves and interfaces arises in many real applications in nature and technology. In fluid dynamics, vortex structures can be formed along a one-dimensional open or often closed curve that represents a vortex ring. We refer to Meleshko et al.  \cite{Meleshko:12} for a review, and Fukumoto et al. \cite{Fukumoto1987, Fukumoto1991} for examples. These structures, in fact, have a finite cross section varying along the vortex and can contain another phase, e.g. air in water. The difference in densities causes the cross section to be greater in the upper parts of the vortex ring than in the lower parts. The vortex cross-section or diameter becomes a scalar quantity to be transported along the vortex curve (see also \cite{Padilla19}). Certain defects in the crystalline lattice as linear structures, called dislocations, form macroscopic material properties (see Hirth, Lothe \cite{Hirth}, or Kubin \cite{Kubin}). Their specific configurations (jogs) can be described by scalar quantities displaced along the dislocations (see Niu \emph{et al.} \cite{niu2019,niu2017}). Nanofibers can be produced by electrospinning - jetting polymer solutions in high electric fields into ultrafine nanofibers (see Reneker \cite{Reneker}, Yarin \emph{et al.} \cite{Yarin}, He \emph{et al.} \cite{He}). These structures move freely in space and can dry out during electrospinning from a solution - exhibit internal mass transfer (see \cite{Wu}). A closed simple curve can also represent a 2D cut through the myocardium surface, the heart muscle operated by an electric signal broadcast by the sinoatrial node \cite{Guy06}. This signal is represented by ions located along the cell membrane, their potential together with the ability of cell membranes to allow the motion of ions \cite{Kee98} are the scalar variables appearing in the FitzHugh-Nagumo reaction-diffusion model \cite{KanBen21} on the curved myocardium-surface curve.
Thin liquid pathways occur between grains and ice during soil freezing and mediate mass transport responsible for mechanical effects such as frost heave (see, e.g. \cite{Rempel:04, Suh:22, Zak:23}). Following the above motivating applications, we consider the following advection-diffusion problem along moving curves
$\Gamma_t, t\ge0$:
\begin{equation}
\mb{v} = a \kappa \mb{N} + b \kappa \mb{B} +  \mb{F}, 
\qquad 
\partial_t \varrho = c \Delta_{\Gamma} \varrho + G, 
\label{Tr:1}
\end{equation}
where $\mb{v}$ is the velocity vector of the curve, $\varrho$ is the scalar quantity displaced along the curve $\Gamma_t$.
The triple $\mb{N}$, $\mb{T}$, $\mb{B}$ is the Frenet frame composed of normal, tangent, and binormal vectors to $\Gamma_t$, respectively. In the first equation, $a$ and $b$ are functions influencing motion in the normal and binormal directions and $\mb{F}$ is a general force acting on the curve $\Gamma_t$.  
The force $\mb{F}$ may contain the tangential velocity part $\alpha=\mb{F}\cdot\mb{T}$, as well as non-local quantities that can be expressed as curve integrals of $\mb{X}$ along evolved curve. The general form (\ref{eq:F})) of a non-local force includes, e.g., the Biot-Savart law (\ref{biot-savart-force}) described in Section~\ref{sec-knotted} (see, Fig.~\ref{fig-fourier}). In the second equation
$G$ is the source term, $c>0$ is the diffusion coefficient. The source term $G$ may contain tangential redistribution effects induced by the dynamics of $\Gamma_t$. Here $\Delta_{\Gamma}$ is the Laplace-Beltrami operator with respect to a one-dimensional curve $\Gamma_t$, that is, $\Delta_{\Gamma} = \partial^2_s$ where $s$ is the arc-length parameter.

In this paper, we extend the mathematical results obtained in \cite{BKS2022} for the motion law of the curve to the general transport problem (\ref{Tr:1}). We investigate the motion of closed smooth curves that evolve in space $\mathbb{R}^3$. The governing evolution equation for the the curve~$\Gamma_t$ is accompanied by a parabolic equation for the scalar quantity distributed along the curve~$\Gamma_t$. 
Theoretical analysis of the motion of space curves is contained, among others, in papers by Altschuler and Grayson in \cite{altschuler1991} and \cite{altschuler1992}. We follow the direct Lagrangian approach to describe a flow of 3D curves. The direct Lagrangian approach was applied for planar curve evolution by many authors, e.g. Deckelnick \cite{Deckelnick1995}, Mikula and \v{S}ev\v{c}ovi\v{c} \cite{sevcovic2001evolution,MS2004, MMAS2004},  and others. The resulting system of parabolic equations is coupled with a parabolic equation for the scalar quantity evaluated over the evolved curves. 

We prove the local existence, uniqueness of classical H\"older smooth solutions to the governing system of nonlinear parabolic equations. The main idea of the proof of existence and uniqueness of H\"older smooth solutions is based on the abstract theory of analytic semiflows in Banach spaces due to DaPrato and Grisvard \cite{daprato}, Angenent \cite{Angenent1990, Angenent1990b}, Lunardi \cite{Lunardi1984}.
The numerical approximation scheme is based on the flowing finite-volume method which was proposed by Mikula and \v{S}ev\v{c}ovi\v{c} in \cite{sevcovic2001evolution} for curvature-driven flows of planar curves. In the first numerical examples, we consider evolution of curves with no torsion evolving in rotating and parallel planes. In the second set of examples, we investigate the evolution of the initial knotted curves.

The structure of the paper is as follows. In the following section, we provide a review of the direct Lagrangian approach used to solve curvature-driven flows of a family of curves in three dimensions. In Section 3, we establish a system of non-local partial differential equations for the parameterizations of the evolving curves. Section 4 provides a brief summary of the importance of tangential velocity in solving analytical and numerical problems. The local existence and uniqueness of classical H\"older smooth solutions are demonstrated in Section 5. Section 6 presents semi-analytical examples of curve evolution in rotating and parallel planes. In Section 7, we develop a numerical discretization scheme that utilizes the method of flowing finite volumes. Finally, Section 8 presents numerical examples of evolving curves and scalar quantities.

\section{Direct Lagrangian description for evolution of closed curves}

In this paper, we consider a family $\{\Gamma_t, t\ge 0\}$ of curves that evolve in space $\mathbb{R}^3$. Its evolution can be described by the position vector
$\mb{X} = \mb{X}(t,u)$ for $t \ge 0$ as follows:
\[
\Gamma_t = \{ \mb{X}(t,u),  u \in I \},
\]
where $I=\mathbb{R}/\mathbb{Z}\simeq S^1$ denotes the periodic interval. Therefore, we will identify $I$ with the interval $[0,1]$. The unit tangent vector $\mb{T}$ to $\Gamma_t$ is defined as $\mb{T} = \partial_s \mb{X}$, where $s$ is the unit arc-length parameterization defined by $ds = |\partial_u \mb{X}| du$. Here, $|\cdot|$ denotes the Euclidean norm. The curvature $\kappa$ of a curve $\Gamma$ is defined as $\kappa = |\partial_{s}\mb{X}\times\partial^2_{s} \mb{X} | = | \partial_{s}^2 \mb{X} |$. If $\kappa > 0$, we can define the Frenet frame along the curve $\Gamma_t$ with unit normal $\mb{N} = {\kappa}^{-1} \partial_{s}^2 \mb{X}$ and binormal vectors $\mb{B} = \mb{T} \times \mb{N}$, respectively. Recall the Frenet-Serret formulae: 
\[
\frac{\text{d}}{\text{d} s} 
\begin{pmatrix}
\mb{T} \\ \mb{N} \\ \mb{B}
\end{pmatrix}
=
\begin{pmatrix}
0 & \kappa & 0 \\
-\kappa & 0 & \tau \\
0 & -\tau & 0
\end{pmatrix}
\begin{pmatrix}
\mb{T} \\ \mb{N} \\ \mb{B}
\end{pmatrix},
\]
where $\tau$ is the torsion of $\Gamma_t$ given by
$\tau =
\kappa^{-2} (\mb{T}\times\partial_s\mb{T}) \cdot \partial_s^2\mb{T}
= \kappa^{-2} (\partial_s\mb{X}\times\partial_s^2\mb{X}) \cdot \partial_s^3\mb{X}$. 
We study a coupled system of evolutionary equations describing evolution of closed 3D curves evolving in normal and binormal directions, and the scalar quantity $\varrho$ computed over the evolving family of curves,
\begin{equation}
\label{eq:ab}
\begin{split}
 \partial_t\mb{X} &= a \partial^2_{s} \mb{X} + b (\partial_{s}\mb{X}\times\partial^2_{s} \mb{X})  + \mb{F}(\mb{X}, \partial_{s}\mb{X} ,\varrho, \partial_s\varrho),\\
 \partial_t\varrho &= c \partial^2_{s} \varrho + G(\mb{X}, \partial_{s}\mb{X}, \varrho, \partial_s\varrho,  \kappa\beta),
\end{split}
\end{equation}
where the following scalar functions $a=a(\mb{X}, \partial_{s}\mb{X}, \varrho, \partial_s\varrho) > 0$, $b=b(\mb{X}, \partial_{s}\mb{X}, \varrho, \partial_s\varrho)$, and $c=c(\mb{X}, \partial_{s}\mb{X}, \varrho, \partial_s\varrho)>0$, and $\beta$ is the normal velocity, $\beta  = (a \partial^2_{s} \mb{X} + \mb{F}) \cdot \mb{N}$. The source terms $\mb{F}$ and $G$ are assumed to be bounded and smooth functions of their arguments.
Both source terms $\mb{F}$ and $G$ for the curve $\Gamma_t$ and for the scalar quantity $\varrho$, respectively, can depend locally on the position vector $\mb{x}$, the tangent vector $\mb{t}$, the scalar quantity $\varrho$, and non-locally on the entire evolving curve $\Gamma_t$ (e.g., on the length of $\Gamma_t$ or on an another integral quantity over $\Gamma_t$). We assume that the source term $G$ may depend on the curvature $\kappa=|\partial_{s}\mb{X}\times\partial^2_{s} \mb{X} | = | \partial_{s}^2 \mb{X} |$. Thus, the parabolic equation for $\varrho$ may depend on the highest-order derivative $| \partial_{s}^2 \mb{X}|$. As an example, one can consider the case where 
\begin{equation}
\mb{F}(\mb{x}, \mb{t}) = \int_{\Gamma} f(\mb{x}, \mb{t}, \mb{X}, \mb{\partial_s\mb{X}}) ds
= \int_0^1 f(\mb{x}, \mb{t}, \mb{X}(u), \partial_u\mb{X}(u)/ | \partial_u\mb{X}(u)|) |\partial_u \mb{X}| du. 
\label{eq:F}
\end{equation}
Here $f:\mathbb{R}^3\times \mathbb{R}^3 \times \mathbb{R}^3\times \mathbb{R}^3\to \mathbb{R}^3$,  is a given smooth function.

Since $\partial^2_s \mb{X} =\kappa \mb{N}$ and $\mb{B}= \mb{T} \times \mb{N}$ the governing equation of the system (\ref{eq:ab}) for the position vector $\mb{X}$ and the evolution of $\Gamma_t$ can be expressed in terms of the governing equation
\begin{equation}
    \label{eq:mcfl}
    \partial_t \mb{X} = \beta \mb{N}  + \gamma \mb{B} + \alpha \mb{T}, 
    \qquad \beta = a \kappa + \mb{F} \cdot \mb{N}, \quad
\gamma = b \kappa + \mb{F} \cdot \mb{B}, \quad
\alpha = \mb{F} \cdot \mb{T}, 
\end{equation}
where $\beta$, $\gamma$ and $\alpha$ are normal, binormal, and tangential components of the velocity, respectively.  Denote by $L(\Gamma_t)$ the total length of the curve $\Gamma_t$ parameterized by $\mb{X}$:
\[
L(\Gamma_t)= \int_{\Gamma}  ds= \int_0^1 |\partial_u \mb{X}| du. 
\]
Furthermore, if the curve $\Gamma_t$ evolves according to the governing equation (\ref{eq:mcfl}), we can derive an equation for the time derivative of the local length $g(t,u)|\partial_u \mb{X}(t,u)|$ of the curve $\Gamma_t$:
\begin{equation}
    \label{eq:dtg}
    \partial_t|\partial_u \mb{X}| = 
    |\partial_u \mb{X}| (\partial_s \alpha - \beta \kappa), \qquad \partial_t L(\Gamma_t) = - \int_{\Gamma} \beta \kappa ds, 
\end{equation}
where $\alpha$ and $\beta$ are tangential and normal components of the velocity in (\ref{eq:mcfl}), respectively.

\section{Parabolic equation for a scalar quantity on an evolving curve}

Assume that $\hat\varrho$ is a time-dependent function $\hat\varrho(t, \cdot): \Gamma_t\to\mathbb{R}$, $t\in[0,T]$, where $\Gamma_t$ is a curve parameterized by its position vector $\mb{X}$. 
In what follows, we shall denote by $\phi(t,u)$ the scalar quantity evaluated at time $t$ and the parameter $u\in[0,1]$ while $\hat\phi(t,\mb{X})$ stands for a scalar quantity evaluated at time $t$ and a point $\mb{X}\in\Gamma_t$ related to each other through equality $\phi(t,u):=\hat\phi(t,\mb{X}(t,u))$. Let $t\in [0,T]$ be fixed. Consider the arc-length integral $m(t) = \int_{\widehat{\mb{X}_1\mb{X}_2}} \hat\varrho(t,\mb{X}) \text{d}s$  of the scalar quantity $\hat \varrho$ along the curve segment $\widehat{\mb{X}_1\mb{X}_2}\subset \Gamma_t$ between the points $\mb{X}_1, \mb{X}_2\in \Gamma_t$. Notice that $m(t)$ is an intrinsic quantity, that is, it is independent of a particular parameterization of the curve $\Gamma_t$. If we introduce a parametrization dependent quantity $\varrho(t,u) =\hat\varrho(t, \mb{X}(t,u))$ then  
\begin{equation}
m(t)= \int_{u_1}^{u_2} \varrho(t,u) |\partial_u \mb{X}(t,u)| \text{d}u,
\label{concentrationdef}
\end{equation}
where $\mb{X}_i=\mb{X}(t,u_i)\in \Gamma_t, i=1,2$.
Hence $\varrho(t,u) =\hat\varrho(t, \mb{X}(t,u))$ is the density along the curve $\Gamma_t$ evaluated in terms of the parameter $u \in [u_1,u_2]\subseteq [0,1], \mb{X}_i=\mb{X}(t,u_i)$. 
Following standard textbooks by Slattery \cite{Slatt1}, \cite[Chapter 1.3]{Slatt2}, we can derive an advection-diffusion equation for the scalar quantity $\varrho$, we assume that the time derivative of $m(t)$ can be expressed in terms of inflow and outflow of flux $\hat j(t,\mb{X})$ through the end points $\mb{X}_1$ and $\mb{X_2}$ of the segment $\widehat{\mb{X_1}\mb{X}_2}$, and the prescribed external source term with a density $\hat{q}(t, \mb{X})$ of the segment as follows: 
\begin{eqnarray*}
\frac{\text{d}m}{\text{d}t} &=& 
- \left[\hat j(t,\mb{X})\right]_{\mb{X}_1}^{\mb{X}_2} + \int_{\widehat{\mb{X_1}\mb{X}_2}} \hat q(t, \mb{X}) \text{d} s  = - \left[j(t,u)\right]_{u_1}^{u_2} + \int_{u_1}^{u_2} q(t,u) |\partial_u \mb{X}| \text{d} u 
\\
&=& -\int_{u_1}^{u_2} \frac{\partial}{\partial u} j(t,u) \text{d} u 
+ \int_{u_1}^{u_2} q(t,u) |\partial_u \mb{X}| \text{d} u 
\\
&=& - \int_{u_1}^{u_2} \frac1{|\partial_u \mb{X}|} \frac{\partial}{\partial u} j(t,u) |\partial_u \mb{X}| \text{d} u + \int_{u_1}^{u_2} q(t,u) |\partial_u \mb{X}| \text{d} u
\end{eqnarray*}
where $j(t,u)=\hat j(t, \mb{X}(t,u)), q(t,u)=\hat q(t, \mb{X}(t,u))$. 

We assume that the mass flux along the curve is expressed as a sum of the advection term and the linear Fickian term with a diffusion constant $k>0$. The constitutive law is given by right-hand side of the following equation:
\[
\hat j  = - c \partial_s \hat\varrho + v \hat\varrho =  -c \frac{1}{|\partial_u \mb{X}|} \frac{\partial \varrho}{\partial u} + v \varrho, 
\]
where $\partial_s$ denotes the derivative with respect to the arc-length parameterization $s$, and $v(t,u)=\hat v(t, \mb{X}(t,u))$ denotes the advection velocity along the curve. By substituting, we obtain the following equation:
\begin{equation}
    \label{eq:balance_integral}
    \frac{\text{d}m}{\text{d}t} = 
    \int_{u_1}^{u_2} \left[ 
    \frac{1}{|\partial_u \mb{X}|} \frac{\partial}{\partial u} \left(
    \frac{c}{|\partial_u \mb{X}|} \frac{\partial \varrho}{\partial u}
    \right)
    - \frac{1}{|\partial_u \mb{X}|} \frac{\partial (v \varrho)}{\partial u}
    +q
    \right]
    |\partial_u \mb{X}| \text{d}u.
\end{equation}
Here, we can see that the tangential motion of $\Gamma_t$ can also influence the distribution of the quantity $\varrho$ along $\Gamma_t$. Using (\ref{eq:dtg}) in (\ref{eq:balance_integral}) we finally obtain the advection-diffusion equation: 
\begin{equation}
    \label{eq:balance_differential}
    \frac{\partial \varrho}{\partial t}  + 
    \varrho \left(  \partial_s \alpha - \kappa \beta \right) = 
    c \partial^2_s \varrho - \partial_s (\varrho v)  + q 
\end{equation}
for the scalar quantity $\varrho$ depending on the evolving curve $\Gamma_t$.

\section{The role of the tangential velocity}

Recall that the tangential component $\alpha$ of the velocity of evolving family of closed curves $\{\Gamma_t, t\ge 0\}$ has no impact on the shape of evolving curves (see, e.g., Epstein and Gage \cite{EpsteinGage}). On the other hand, considering a numerical solution of (\ref{eq:mcfl}), properly chosen tangential velocity functional $\alpha= \mb{F} \cdot \mb{T}$ plays an important role in the stability of a computational scheme (see, e.g., Mikula and \v{S}ev\v{c}ovi\v{c} \cite{sevcovic2001evolution, MS2004, MMAS2004}). The tangential velocity has a significant impact on the analysis of the evolution of curves from both the analytical and numerical points of view. It was shown by Hou et al. \cite{Hou}, Kimura \cite{Kimura}, Mikula and \v{S}ev\v{c}ovi\v{c} \cite{sevcovic2001evolution, MS2004, MMAS2004}, Yazaki and \v{S}ev\v{c}ovi\v{c} \cite{SevcovicYazaki2012}.  Barrett \emph{et al.} \cite{Barret2010, Barret2012}, Elliott and Fritz \cite{Elliot2017}, investigated the gradient and elastic flows for closed and open curves in $\mathbb{R}^d, d\ge 2$. They constructed a numerical approximation scheme using a suitable tangential redistribution. Bene\v{s}, Kol{\' a}{\v r} and \v{S}ev\v{c}ovi\v{c} investigated the role of tangential velocity in the context of material science \cite{BKS2017} and the evolution of interacting curves \cite{BKS2020}, \cite{BKS2022}. In \cite{Garcke2009} Garcke, Kohsaka and \v{S}ev\v{c}ovi\v{c} applied the uniform tangential redistribution in the theoretical proof of nonlinear stability of stationary solutions for curvature driven flow with triple junction in the plane. In \cite{MS2014} Reme\v{s}\'{\i}kov\'a \emph{et al.} proposed and analyzed the tangential redistribution for flows of closed manifolds in $\mathbb{R}^n$. Using equations (\ref{eq:dtg}) we can calculate the time derivative of the ratio of the relative local length $|\partial_u\mb{X}(u,t)|$ to the total length $L(\Gamma_t) = \int_0^1 |\partial_u\mb{X}(u,t)| du$:
\begin{equation}
\frac{\partial }{\partial t}
\frac{|\partial_u\mb{X}(u,t)|}{L(\Gamma_t)} = 
\frac{|\partial_u\mb{X}(u,t)|}{L(\Gamma_t)}
\left(
\partial_{s} \alpha  -\kappa \beta +  \langle  \kappa \beta\rangle \right), \quad\text{where}\ \langle  \kappa \beta\rangle = \frac{1}{L(\Gamma_t)} \int_{\Gamma} \kappa \beta ds.
\label{alpha}
\end{equation}
Therefore, the ratio $|\partial_u\mb{X}(u,t)|/L(\Gamma_t)$ is constant with respect to the time $t$, that is, 
\begin{equation}
\frac{|\partial_u\mb{X}(u,t)|}{L(\Gamma_t)} = \frac{|\partial_u\mb{X}(u,0)|}{L(\Gamma_0)},
\quad \text{for any} \ t\ge0, 
\label{uniformalpha}
\end{equation}
provided that the tangential velocity part $\alpha=\mb{F}\cdot\mb{T}$ satisfies $\partial_{s} \alpha = \kappa \beta -  \langle  \kappa \beta\rangle$ (see Fig.~\ref{fig:redis}). Another suitable choice of the tangential velocity $\alpha$ is the so-called asymptotically uniform tangential velocity proposed and analyzed by Mikula and \v{S}ev\v{c}ovi\v{c} in \cite{MS2004, MMAS2004}. If $\omega>0$ and
\begin{equation}
\partial_{s} \alpha = \kappa \beta  - 
 \langle  \kappa \beta\rangle  + \left( \frac{L(\Gamma_t)}{|\partial_u\mb{X}(u,t)|} - 1\right) \omega ,
\label{alpha-asymptotic}
\end{equation}
then, using (\ref{alpha}) we obtain
$\lim_{t\to \infty} \frac{|\partial_u\mb{X}(u,t)|}{L(\Gamma_t)} =1 $ uniformly with respect to $u\in[0,1]$ provided $\omega>0$ is a positive constant. This means that the redistribution becomes asymptotically uniform.

\begin{figure}
\begin{center}
\includegraphics[width=0.45\textwidth]{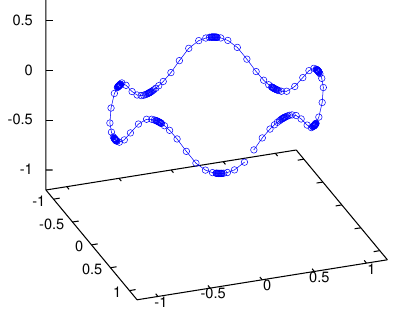}
\includegraphics[width=0.45\textwidth]{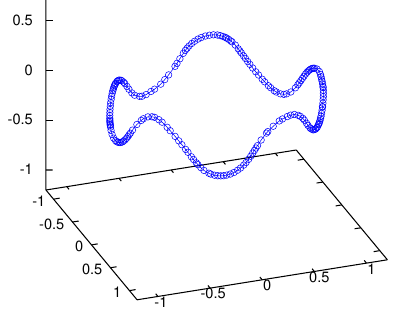}
\end{center}
\caption{Illustration of importance of a suitable choice of the tangential redistribution. Left: no tangential redistribution. Right: tangential redistribution preserving the relative local length (i.e., $\omega = 0$). }
\label{fig:redis}
\end{figure}

\section{Existence and uniqueness of classical H\"older smooth solutions}

In this section, we present results on the existence and uniqueness of the classical H\"older smooth solution to the system of equations (\ref{eq:ab}) that governs the evolution of a curve $\Gamma_t$ parameterized by $\mb{X}$ and a scalar quantitity $\varrho$. We follow the analytical approach developed by Bene\v{s}, Kol{\' a}{\v r}, \v{S}ev\v{c}ovi\v{c} \cite{BKS2022}. The main idea of the proof is based on the abstract theory of analytic semiflows in Banach spaces due to DaPrato and Grisvard \cite{daprato}, Angenent \cite{Angenent1990, Angenent1990b}, Lunardi \cite{Lunardi1984}. The proof of the local existence and uniqueness of a classical H\"older smooth solution is based on the analysis of the position vector equation (\ref{eq:ab}) in which we choose the uniform tangential velocity $\alpha$. It leads to a uniformly parabolic equation (\ref{eq:ab}) provided that the diffusion coefficient $a$ is uniformly bounded from below by a positive constant. In our proof, assumptions of strict positivity of the curvature $\kappa$ and the existence of the Frenet frame are not required. The main idea is to rewrite the system (\ref{eq:ab}) in the form of an initial value problem for the abstract parabolic equation:
\begin{equation}
\partial_t \Phi = \mathscr{F}(\Phi), \quad \Phi(0) = \Phi_0,
\label{abstratF}
\end{equation}
in a scale of Banach spaces. Furthermore, we have to show that for any $\tilde{\Phi}$, the linearization $\mathscr{A}=\mathscr{F}'(\tilde{\Phi} )$ generates an analytic semigroup and belongs to the so-called maximal regularity class. Since the source term $G$ may depend on the curvature $\kappa =|\partial_{s}\mb{X}\times\partial^2_{s} \mb{X} | = | \partial_{s}^2 \mb{X} |$. Thus, the parabolic equation for $\varrho$ may depend on the derivative of highest order $| \partial_{s}^2 \mb{X}|$. As a consequence, linearization $\mathscr{F}'(\tilde{\Phi})$ is a skew-symmetric linear operator having an off-diagonal term. 

First, we define the underlying function spaces. Assume that $0<\varepsilon<1$ and $k$ is a nonnegative integer. Let us denote by $h^{k+\varepsilon}(S^1)$ the so-called little H\"older space, that is, the Banach space, which is the closure of $C^\infty$ smooth functions in the norm of the Banach space of $C^k$ smooth functions defined in the periodic domain $S^1$, and such that the $k$-th derivative is $\varepsilon$-H\"older smooth. The norm is given as a sum of the $C^k$ norm and the H\"older semi-norm of the $k$-th derivative, that is,
\[
h^{k+\varepsilon}(S^1) =\{   \phi:S^1\to \mathbb{R}, \ \Vert \phi\Vert_{h^{k+\varepsilon}} = \sum_{j=0}^k \sup_{u\in S^1} |\partial^j_u \phi(u)| 
+  \sup_{u,v\in S^1}\frac{|\partial^k_u\phi(u) - \partial^k_v\phi(v)|}{|u-v|^\varepsilon}
<\infty \}
\]
Next, by $C^k([0,T], \mc{E})$ we denote the Banach space consisting of all $C^k$ continuous functions from the interval $[0,T]$ to the Banach space $\mc{E}$ with the norm
\[
\Vert\Phi\Vert_{C^k([0,T], \mc{E})} = \sum_{j=0}^k \max_{t\in[0,T]} \Vert \partial_t^j\Phi(t)\Vert_{\mc{E}}.
\]
For a given $0<\varepsilon <1$  we define the following scale of Banach spaces of $\varepsilon$-H\"older continuous functions defined in the periodic domain $S^1$:
\begin{equation}
E_k = h^{2k +\varepsilon}(S^1)\times h^{2k +\varepsilon}(S^1) \times h^{2k +\varepsilon}(S^1), \quad
\mc{E}_k = E_k \times h^{2k +\varepsilon}(S^1), \quad k=0, 1/2, 1 .
\label{Espaces}
\end{equation}
Recall the following continuous and compact embedding: $\mc{E}_{1}\hookrightarrow \mc{E}_{1/2}\hookrightarrow \mc{E}_{0}$. Furthermore, using the Gagliardo-Nirenberg interpolation inequality on the scale of H\"older spaces (see \cite[Theorem 1]{BM2018}), we have 
\begin{equation}
\Vert\Phi \Vert_{\mc{E}_{\frac12}} \le C \Vert\Phi \Vert_{\mc{E}_{1}}^{\frac12}
\Vert\Phi \Vert_{\mc{E}_{0}}^{\frac12}
\le \delta \Vert\Phi \Vert_{\mc{E}_{1}} + \frac{C^2}{4\delta} \Vert\Phi \Vert_{\mc{E}_{0}}, 
\quad
\text{for any} \ \delta>0 \ \text{and} \ \Phi\in \mc{E}_{1},
\label{interpolation}
\end{equation}
where $C>0$ is a constant depending on $\varepsilon\in(0,1)$ only. For $a,b\in h^{1+\varepsilon}(S^1)$ and $\mb{t}=(t_1,t_2,t_3)^T\in E_{\frac12}$ we define the matrix function:
\[
A(a,b,\mb{t}) = a I + b [\mb{t}]_\times :=
\left(
\begin{array}{ccc}
     a & -b t_3 & b t_2
     \\
     b t_3 & a & -b t_1
     \\
     -b t_2 & b t_1 & a
\end{array}
\right).
\]
Clearly,
$a \partial^2_{s} \mb{X} + b (\partial_{s}\mb{X}\times\partial^2_{s} \mb{X}) = A(a,b,\partial_s\mb{X}) \partial^2_{s} \mb{X}$. Then the system of equations (\ref{eq:ab})  
can be rewritten as follows:
\begin{equation}
\begin{split}
\partial_t \mb{X} &= A(a,b,\partial_s\mb{X})  \partial^2_s \mb{X} + \mb{F} (\mb{X}, \partial_{s}\mb{X} ,\varrho, \partial_s\varrho), \\
\partial_t \varrho &=  c \partial^2_s \varrho + G(\mb{X}, \partial_{s}\mb{X} ,\varrho, \partial_s\varrho, \kappa\beta).  
\end{split}
\label{system} 
\end{equation} 
Let us denote $\Phi= (\mb{X}, \varrho)^T$. In what follows, we shall assume that the mappings:
\begin{equation}
\begin{split}
&\mc{E}_{1/2} \ni \Phi \mapsto \mb{F} (\mb{X}, \partial_{s}\mb{X} ,\varrho, \partial_s\varrho) \in E_0,
\\
& \mc{E}_{1/2} \times h^\varepsilon(S^1) \ni (\Phi, \eta) \mapsto G (\mb{X}, \partial_{s}\mb{X} ,\varrho, \partial_s\varrho, \eta) \in h^\varepsilon(S^1),
\\
& \mc{E}_{1/2} \times h^{1+\varepsilon}(S^1) \ni (\Phi, \varrho) \mapsto 
d(\mb{X}, \partial_{s}\mb{X} ,\varrho, \partial_s\varrho) \in h^\varepsilon(S^1),\quad d\in\{a,b,c\},
\\
& \text{are $C^1$ continuous and globally Lipschitz continuous.}
\end{split}
\label{FG}
\end{equation}
For example, the function $G$ can be defined as in (\ref{eq:balance_differential}) with the uniform ($\omega=0$) or asymptotically uniform ($\omega>0$) tangential velocity $\alpha$ given by (\ref{alpha-asymptotic}) with $\omega=0$,  i.e., 
\[
G (\mb{X}, \partial_{s}\mb{X} ,\varrho, \partial_s\varrho, \eta) = -\varrho \left(  \partial_s \alpha - \eta \right) +  c \partial^2_s \varrho - \partial_s (\varrho v)  + q 
\]
where the tangential velocity $\alpha$ is calculated from the following equation:
\begin{equation}
\partial_{s} \alpha =  \eta -  \langle \eta\rangle  + \left( \frac{L(\Gamma_t)}{|\partial_u\mb{X}(u,\cdot)|} - 1\right) \omega,\ \text{where}\ \eta=\kappa \beta. 
\end{equation} 
Then the system of governing equations (\ref{system}) can be rewritten in the abstract form (\ref{abstratF}) where $\mathscr{F}$ is defined by the right-hand side of (\ref{system}). 
Suppose that $\tilde{\Phi}= (\tilde{\mb{X}}, \tilde{\varrho})^T\in \mc{E}_1$. 
Recall that $\kappa = |\partial^2_{s} \mb{X}|$ and $\beta = a \kappa + \mb{F} \cdot \mb{N}$, therefore $\eta:= \kappa\beta = a \kappa^2 + \mb{F} \cdot \kappa \mb{N} = a |\partial^2_{s} \mb{X}|^2 + \mb{F} \cdot \partial^2_{s} \mb{X} $ because $\kappa \mb{N} = \partial_s\mb{T} = \partial^2_{s} \mb{X}$. Therefore, the Fr\'echet derivative at $\tilde{\mb{X}}\in E_1$ of $\eta=\eta(\mb{X})$ in the direction $\mb{X}\in E_1$ has the form
\[
\eta^\prime(\tilde{\mb{X}}) \mb{X} = (2 \tilde{a} \tilde{\mb{X}} +  \tilde{\mb{F}} ) \cdot \partial^2_{s} \mb{X} 
+ \mathscr{L}[\mb{X}],
\]
where $\tilde{\mb{F}} = \mb{F}(\tilde{\mb{X}}, \partial_{s}\tilde{\mb{X}}, \tilde{\varrho}, \partial_s\tilde{\varrho}) \in E_0$, and $ \mathscr{L}:E_{1/2} \to E_0$ is a bounded linear operator containing lower-order derivatives of $\mb{X}$, including non-local dependencies. Hence, the principal part $\mathscr{A}_0$ of the linearization $\mathscr{F}'(\tilde{\Phi})$ containing the second-order derivatives has the matrix form $\partial_t\Phi = \mathscr{A}_0 \Phi$ where $\mathscr{A}_0$ is a skew-diagonal linear operator:
\[
\mathscr{A}_0 \Phi =
\left(
\begin{array}{cc}
     \tilde{A} & 0
     \\
     \tilde{d}^T  & \tilde{c}
\end{array}
\right) \partial_s^2 \Phi,
\]
where $\tilde a=a(\tilde{\mb{X}}, \partial_{s}\tilde{\mb{X}}, \tilde{\varrho}, \partial_s\tilde{\varrho}) > 0,\  
\tilde b=b(\tilde{\mb{X}}, \partial_{s}\tilde{\mb{X}}, \tilde{\varrho}, \partial_s\tilde{\varrho}), \ 
\tilde c=c(\tilde{\mb{X}}, \partial_{s}\tilde{\mb{X}}, \tilde{\varrho}, \partial_s\tilde{\varrho}) >0, \tilde{A} = A(\tilde{a},\tilde{b},\partial_s\tilde{\mb{X}})$, and $\tilde{d} = G^\prime_w(\tilde{\mb{X}}, \partial_{s}\tilde{\mb{X}}, \tilde{\varrho}, \partial_s\tilde{\varrho}, \tilde{\eta}) \eta^\prime(\tilde{\mb{X}})  \in h^\varepsilon(S^1)$. Clearly, $\tilde{a},\tilde{b},\tilde{c}, \tilde{A}_{ij}\in h^{1+\varepsilon}(S^1), \tilde{d}\in E_{1/2}$. 
The operator $\mathscr{A}_0$ generates an analytic semigroup $\{\exp(t\,\mathscr{A}_0), t\ge0\}$ in $_0$, 
\[
\exp(t\,\mathscr{A}_0)
=
\left(
\begin{array}{cc}
     \exp(t\,\tilde{A}\partial^2_s), & 0
     \\
    D(t), & \exp(t\,\tilde{c}\partial^2_s)
\end{array}
\right), 
\]
where Duhamel's integral $D$ is given by $$D(t)= \int_0^t \exp((t-\tau)\tilde{c}\partial^2_s) (\tilde{d}^T \partial^2_s) \exp(\tau\tilde{A}\partial^2_s) d\tau.$$ Next, we will prove that $\mathscr{A}_0$ belongs to the maximal regularity class ${\mathcal M}(\mc{E}_1,\mc{E}_0)$. This means that for any time interval $[0,T], T>0$, the mapping $\partial_t - \mathscr{A}_0: \Phi  \mapsto (\Phi_0, f)$ from $C([0,T], \mc{E}_1)\cap C^1([0,T], \mc{E}_0)$ to $ \mc{E}_1 \times C([0,T], \mc{E}_0)$, is invertible, where $f(t) = \partial_t \Phi(t) - \mathscr{A}_0 \Phi(t), \ \Phi_0=\Phi(0)$, i.e., the inverse operator $(\partial_t - \mathscr{A}_0 )^{-1}:  \mc{E}_1 \times C([0,T], \mc{E}_0)\to C([0,T], \mc{E}_1)\cap C^1([0,T], \mc{E}_0)$ exists and is a bounded linear operator. To prove this statement, assume $f=(f^{\mb{X}}, f^\varrho)\in  C([0,T], \mc{E}_0)=C([0,T], E_0)\times C([0,T], h^\varepsilon(S^1))$, and $\Phi_0=(\mb{X}_0, \varrho_0)\in \mc{E}_1=E_1\times h^{2+\varepsilon}(S^1)$. Assume $\tilde{a},\tilde{b}\in h^{1+\varepsilon}(S^1)$ and that the function $\tilde{a}$ is strictly positive, $T>0$. According to \cite[Proposition 3]{BKS2022}\label{maximalregularity} the operator $(\tilde{a} \pm i \tilde{b}) \partial^2_s $ belongs to the maximal regularity class ${\mathcal M}(h^{2+\varepsilon}(S^1), h^{\varepsilon}(S^1))$ on the time interval $[0,T]$. Moreover, if $\tilde{\mb{t}}=\partial_s\tilde{\mb{X}} \in E_{\frac12}$, then the linear operator $\tilde{A}\partial^2_s  = A(\tilde{a},\tilde{b},\partial_s\tilde{\mb{X}})\partial^2_s $  belongs to the maximal regularity class ${\mathcal M}(E_1, E_0)$ in the time interval $[0,T]$. This means that the operator $\partial_t - \tilde{A}\partial^2_s: \mb{X} \ni C([0,T], E_1)\cap C^1([0,T], E_0) \mapsto (\mb{X}_0, f^{\mb{X}}) \in E_1 \times C([0,T], E_0)$ is invertible. Similarly, the operator $\tilde{c} \partial^2_s $ belongs to the maximal regularity class ${\mathcal M}(h^{2+\varepsilon}(S^1), h^{\varepsilon}(S^1))$. Therefore, the inverse operator $(\partial_t - \mathscr{A}_0)^{-1}$ is given by $\Phi=(\mb{X},\varrho)=(\partial_t - \mathscr{A}_0)^{-1} (\Phi_0,f)$ is bounded. It can be expressed as $\mb{X}=(\partial_t - \tilde{A}\partial^2_s)^{-1} (\mb{X}_0,f^{\mb{X}})$ and $\varrho=(\partial_t - \tilde{c}\partial^2_s)^{-1} (\varrho_0, \xi)$ where $\xi=f^\varrho+\tilde{d}^T\partial^2_s \mb{X} \in C([0,T], h^\varepsilon(S^1))$.  As a consequence, $\mathscr{A}_0$ belongs to the maximal regularity class ${\mathcal M}(\mc{E}_1,\mc{E}_0)$. 

We decompose the linearization $\mathscr{F}'(\tilde{\Phi}) = \mathscr{A}_0 + \mathscr{A}_1$ where the operator $\mathscr{A}_1= \mathscr{F}'(\tilde{\Phi}) - \mathscr{A}_0$ depends on lower-order derivatives $\partial^k_s, k=0,1$, of $\tilde{\Phi}$ in the sense that it is a bounded linear operator $\mathscr{A}_1: \mc{E}_{1/2}\to \mc{E}_0$. Therefore, the operator $\mathscr{A}_1$ considered as a mapping from $\mc{E}_1\to \mc{E}_0$ has the relative zero norm with respect to $\mathscr{A}_0$ . Therefore, the linearization $\mathscr{F}'(\tilde{\Phi})$ belongs to the maximal regularity class ${\mathcal M}(\mc{E}_1,\mc{E}_0)$ because this class is closed with respect to perturbation with the relative zero norm (cf.  \cite[Lemma 2.5]{Angenent1990}, DaPrato and Grisvard \cite{daprato}, Lunardi \cite{Lunardi1984}).

The method based on analytic semigroup and maximal regularity theory has been successfully applied to prove the existence, regularity, and uniqueness of solutions representing evolving families of 2D and 3D curves in the series of by \v{S}ev\v{c}ovi\v{c}, Mikula, Yazaki, Bene\v{s} and Kol\'a\v{r}
\cite{sevcovic2001evolution}, \cite{MS2004}, \cite{MMAS2004}, \cite{SevcovicYazaki2012}, 
\cite{BKS2017}, \cite{BKS2020}, \cite{BKS2022}.

By a solution to the system of nonlinear equations (\ref{eq:ab}) that satisfies the initial condition $\Phi_0=(\mb{X}_0, \varrho_0)\in E_1\times h^{2+\varepsilon}(S^1)$ we mean a function $\Phi=(\mb{X},\varrho)\in  C([0,T], \mc{E}_1) \cap C^1([0,T], \mc{E}_0)$ such that $\Phi(\cdot, 0)= (\mb{X}_0, \varrho_0)$. Now, we can state the following result on the local existence and uniqueness of solutions.

\begin{theorem}\label{theo-main}
Assume that the functions $\mb{F}, G, a>0, b, c>0,$ satisfy hypothesis (\ref{FG}) and that the tangential velocity $\alpha$ preserves the relative local length and is given by (\ref{uniformalpha}). Assume that the parameterization $\mb{X}_0=\mb{X}(\cdot,0)$ of the initial curve $\Gamma_0$ belongs to the space $E_1$, and the initial scalar quantity $\varrho_0=\varrho(\cdot, 0)$ belongs to the space $h^{2+\varepsilon}(S^1)$. Then there exists $T>0$ and a unique solution $\Phi=(\mb{X},\varrho)\in  C([0,T], \mc{E}_1) \cap C^1([0,T], \mc{E}_0)$ such that $\Phi(\cdot, 0)= (\mb{X}_0, \varrho_0)$ to the system of parabolic equations (\ref{eq:ab}). 

\end{theorem}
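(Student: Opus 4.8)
The plan is to recast the system (\ref{system}) as the abstract fully nonlinear Cauchy problem (\ref{abstratF}), $\partial_t\Phi=\mathscr{F}(\Phi)$, $\Phi(0)=\Phi_0$, posed on the open set $\mathcal{O}=\{\Phi=(\mb{X},\varrho)\in\mc{E}_1:\inf_u|\partial_u\mb{X}|>0\}\subset\mc{E}_1$, and then to apply the abstract local existence and uniqueness theorem for such problems in a continuous interpolation scale, due to Angenent \cite{Angenent1990,Angenent1990b}, DaPrato and Grisvard \cite{daprato}, and Lunardi \cite{Lunardi1984}. The decisive structural fact is that the little H\"older spaces $h^{2k+\varepsilon}(S^1)$ form a continuous interpolation scale obeying (\ref{interpolation}); this is exactly why we work in $h^{2k+\varepsilon}$ rather than in the full H\"older spaces, since only in the former is the analytic semigroup $\exp(t\mathscr{A}_0)$ strongly continuous up to $t=0$. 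The reformulation requires $\mb{X}$ to remain an immersion so that $\partial_s=|\partial_u\mb{X}|^{-1}\partial_u$ is well defined; here the hypothesis that $\alpha$ preserves the relative local length, (\ref{uniformalpha}), is used, since it forces $|\partial_u\mb{X}(t,\cdot)|=|\partial_u\mb{X}_0|\,L(\Gamma_t)/L(\Gamma_0)$ to stay bounded away from zero and keeps the equation uniformly parabolic with $a$ bounded below.

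Two hypotheses of the abstract theorem must then be verified. First, $\mathscr{F}\in C^1(\mathcal{O},\mc{E}_0)$: this follows from the smoothness and Lipschitz assumptions (\ref{FG}), combined with the fact that on the compact manifold $S^1$ each little H\"older space is a Banach algebra, so that $\Phi\mapsto A(a,b,\partial_s\mb{X})\partial_s^2\mb{X}$ and $\Phi\mapsto G(\cdots,\kappa\beta)$, the latter through the quadratic second-order term $\kappa\beta=a|\partial_s^2\mb{X}|^2+\mb{F}\cdot\partial_s^2\mb{X}$, are continuously Fr\'echet differentiable from $\mc{E}_1$ into $\mc{E}_0$, with derivatives computed as above for $\eta'(\tilde{\mb{X}})$. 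Second, for every $\tilde\Phi\in\mathcal{O}$ the linearization must satisfy $\mathscr{F}'(\tilde\Phi)\in\mathcal{M}(\mc{E}_1,\mc{E}_0)$; this has already been established above via the splitting $\mathscr{F}'(\tilde\Phi)=\mathscr{A}_0+\mathscr{A}_1$, the maximal regularity of the triangular principal part $\mathscr{A}_0$, and the relative-zero-norm perturbation $\mathscr{A}_1\colon\mc{E}_{1/2}\to\mc{E}_0$, whose admissibility rests on the interpolation inequality (\ref{interpolation}). Granting these, one freezes coefficients at $\Phi_0$, inverts $\partial_t-\mathscr{F}'(\Phi_0)$ by maximal regularity, rewrites the problem as a fixed-point equation for $\Phi$ in $C([0,T],\mc{E}_1)\cap C^1([0,T],\mc{E}_0)$, and closes a short-time contraction using that the nonlinear remainder $\mathscr{F}(\Phi)-\mathscr{F}(\Phi_0)-\mathscr{F}'(\Phi_0)(\Phi-\Phi_0)$ is of higher order. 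This produces, for some $T>0$, the unique solution $\Phi=(\mb{X},\varrho)\in C([0,T],\mc{E}_1)\cap C^1([0,T],\mc{E}_0)$ with $\Phi(0)=\Phi_0$; in particular $\mb{X}(t,\cdot)\in (h^{2+\varepsilon}(S^1))^3$ and $\varrho(t,\cdot)\in h^{2+\varepsilon}(S^1)$ are classical H\"older smooth, giving the evolving family $\Gamma_t$ and the transported scalar quantity.

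The step I expect to be the genuine obstacle, and which the preceding analysis is designed to overcome, is the maximal regularity of the coupled principal part $\mathscr{A}_0$, which is not diagonal: the dependence of $G$ on $\kappa\beta$ generates the off-diagonal second-order entry $\tilde d^{\,T}\partial_s^2$ coupling the $\mb{X}$-block into the $\varrho$-equation, so a naive diagonal estimate fails. What saves the argument is that this coupling is strictly lower-triangular, hence one-directional: the $\mb{X}$-block $\tilde A\partial_s^2$ is independent of $\varrho$. One therefore solves the vector $\mb{X}$-equation first, using its maximal regularity from \cite[Proposition 3]{BKS2022}, and then treats the now-known source $\tilde d^{\,T}\partial_s^2\mb{X}\in C([0,T],h^\varepsilon(S^1))$ in the scalar uniformly parabolic $\varrho$-equation $\partial_t-\tilde c\partial_s^2$, which again has maximal regularity. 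The Duhamel term $D(t)$ appearing in $\exp(t\mathscr{A}_0)$ encodes precisely this triangular solve and confirms $\mathscr{A}_0\in\mathcal{M}(\mc{E}_1,\mc{E}_0)$.
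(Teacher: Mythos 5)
Your proposal is correct and follows essentially the same route as the paper: recasting (\ref{eq:ab}) as the abstract problem (\ref{abstratF}) on the little H\"older scale, establishing maximal regularity of the skew-triangular principal part $\mathscr{A}_0$ by solving the $\mb{X}$-block first and feeding $\tilde d^{\,T}\partial_s^2\mb{X}$ into the scalar $\varrho$-equation, absorbing $\mathscr{A}_1$ as a relative-zero-norm perturbation via (\ref{interpolation}), and invoking Angenent's theorem. You correctly identify the off-diagonal second-order coupling as the essential difficulty and resolve it exactly as the paper does.
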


\begin{proof}
The proof follows from the abstract result on the existence and uniqueness of solutions to (\ref{abstratF}) due to Angenent \cite{Angenent1990}. It is based on the linearization of the abstract evolution equation (\ref{abstratF}) in the Banach space $\mc{E}_1$. We showed that, for any $\tilde{\Phi}$, the linearization $\mathscr{F}'(\tilde{\Phi} )$ generates an analytic semigroup and belongs to the maximal regularity class ${\mathcal M}(\mc{E}_1,\mc{E}_0)$ of linear operators from the Banach space $\mathcal{E}_1$ to the Banach space $\mathcal{E}_0$. The local existence and uniqueness of a solution $\Phi=(\mb{X},\varrho)\in  C([0,T], \mc{E}_1) \cap C^1([0,T], \mc{E}_0)$, and $\Phi(\cdot, 0)= \Phi_0$ now follow from the abstract result \cite[Theorem 2.7]{Angenent1990} due to Angenent.
\end{proof}

\begin{corollary}
Suppose that all assumptions of Theorem~\ref{theo-main} are satisfied. Then, for any initial closed curve $\Gamma_0=\{\mb{X}_0(u), u\in[0,1]\}$  and the initial scalar quantity $\hat\varrho_0(\mb{X})=\varrho_0(u), \mb{X}=\mb{X}_0(u)$ where $\mb{X}_0\in E_1$ and $\varrho_0\in h^{2+\varepsilon}(S^1)$, there exist $T>0$, and a unique family of closed curves $\Gamma_t=\{\mb{X}(u,t), u\in[0,1]\}, t\in[0,T]$, evolving according to the governing system of equations (\ref{eq:ab}). The scalar quantity (density) $\hat\varrho(t, \mb{X})$ evaluated at the point $\mb{X}\in\Gamma_t$ is unique with respect to $t\in[0,T]$ and $\mb{X}\in\Gamma_t$.
\end{corollary}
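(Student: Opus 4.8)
The plan is to transfer the existence and uniqueness of the parameterized solution $\Phi=(\mb{X},\varrho)$ furnished by Theorem~\ref{theo-main} into the corresponding statements about the intrinsic geometric objects $\Gamma_t$ and $\hat\varrho(t,\cdot)$. For existence I would simply invoke Theorem~\ref{theo-main} with the prescribed initial parameterization $\mb{X}_0\in E_1$, the initial density $\varrho_0\in h^{2+\varepsilon}(S^1)$, and the uniform tangential velocity (\ref{uniformalpha}), obtaining $T>0$ and a solution $\Phi\in C([0,T],\mc{E}_1)\cap C^1([0,T],\mc{E}_0)$. Setting $\Gamma_t=\{\mb{X}(u,t):u\in[0,1]\}$ and defining the intrinsic density through $\hat\varrho(t,\mb{X}(u,t)):=\varrho(t,u)$ then yields the required family of curves and scalar field. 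One point to verify here is that $\mb{X}(t,\cdot)$ remains a regular immersion: since the uniform tangential velocity enforces (\ref{uniformalpha}), the relative local length $|\partial_u\mb{X}(u,t)|/L(\Gamma_t)$ equals its initial value, so that $|\partial_u\mb{X}(u,t)|>0$ holds on $[0,T]$ as long as $L(\Gamma_t)$ stays positive and finite, which follows from the continuity of $t\mapsto L(\Gamma_t)$ and the regularity of $\mb{X}_0$ after possibly shrinking $T$.

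The heart of the corollary is uniqueness of the geometric family, and here the main obstacle is that Theorem~\ref{theo-main} only delivers uniqueness within the class of parameterizations generated by the single canonical choice (\ref{uniformalpha}) of tangential velocity, whereas the corollary asserts uniqueness of $\Gamma_t$ as a point set, irrespective of parameterization. To bridge this gap I would argue as in Section~4: the shape of an evolving curve is governed only by the normal and binormal components $\beta$ and $\gamma$ of the velocity in (\ref{eq:mcfl}), while the tangential component $\alpha$ merely reparameterizes and has no effect on the trace $\Gamma_t$ (Epstein and Gage \cite{EpsteinGage}). Concretely, suppose $\{\Gamma_t\}$ and $\{\Gamma_t'\}$ are two families of curves solving the geometric law (\ref{eq:mcfl}) with the same initial curve $\Gamma_0$. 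I would introduce a time-dependent reparameterization $u\mapsto\psi(t,u)$, determined by an ordinary differential equation in $u$, so that each reparameterized position vector satisfies precisely the equation driven by the uniform tangential velocity (\ref{uniformalpha}); the solvability of this ODE and the preservation of the little-H\"older regularity rely on the immersion property established above. Both families then reduce to the canonical parameterization with identical initial data, and the uniqueness part of Theorem~\ref{theo-main} forces them to coincide, whence $\Gamma_t=\Gamma_t'$ for all $t\in[0,T]$.

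Finally, for the scalar quantity I would exploit that $\hat\varrho$ was constructed in Section~3 as an intrinsic density: the total mass $m(t)$ in (\ref{concentrationdef}) is independent of the chosen parameterization, so the relation $\varrho(t,u)=\hat\varrho(t,\mb{X}(t,u))$ defines $\hat\varrho(t,\mb{X})$ unambiguously as a function on $\Gamma_t$, and it transforms correctly under the reparameterization $\psi$ from the previous step. Consequently the value $\hat\varrho(t,\mb{X})$ at a geometric point $\mb{X}\in\Gamma_t$ does not depend on which admissible parameterization is used, and the uniqueness of the parameterized density $\varrho$ from Theorem~\ref{theo-main} upgrades to uniqueness of $\hat\varrho(t,\mb{X})$ with respect to $t\in[0,T]$ and $\mb{X}\in\Gamma_t$. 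I expect the delicate step to be the rigorous construction of the reparameterization $\psi$ together with the verification that it preserves the $h^{2+\varepsilon}$ regularity of the parameterization and maps one geometric solution onto the canonical one; everything else is a direct consequence of Theorem~\ref{theo-main} and the intrinsic nature of the constructions in Sections~2 and~3.
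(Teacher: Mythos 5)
Your proposal is correct and follows essentially the same route as the paper: invoke Theorem~\ref{theo-main} for existence and uniqueness of the parameterized solution, then observe that the geometric trace $\Gamma_t$ is invariant under the reparameterizations induced by the choice of tangential velocity (Epstein--Gage), and that $\hat\varrho$ is intrinsic by the construction in Section~3. The paper simply asserts the reparameterization-invariance in one sentence, whereas you spell out the reduction to the canonical tangential velocity via the ODE for $\psi$ and the preservation of the immersion property; this is a more careful rendering of the same argument, not a different one.
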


\begin{proof}
With regard to Theorem~\ref{theo-main} the solution $\Phi=(\mb{X}, \varrho)$ to the system of parabolic equations (\ref{eq:ab}) satisfying a given initial condition $\Phi_0=(\mb{X}_0, \varrho_0)$ is unique in the Banach space $C([0,T], \mc{E}_1) \cap C^1([0,T], \mc{E}_0)$. Since the evolving family of closed curves $\Gamma_t=\{\mb{X}(u,t), u\in[0,1]\}\subset \mathbb{R}^3$ is independent of reparameterization induced by a particular choice of tangential velocity $\alpha$ then the family of closed curves $\Gamma_t$, $t\in[0,T]$, is also unique.

Although the scalar quantity $\varrho(t,u)$ depends on a particular parameterization of $\Gamma_t$, the physical scalar quantity (density) $\hat\varrho(t, \mb{X})=\varrho(t,u)$ evaluated at the point $\mb{X}=\mb{X}(t,u)$ and satisfying the initial condition $\hat\varrho(0, \mb{X}) = \hat\varrho_0(\mb{X})=\varrho_0(u)$, where $\mb{X}=\mb{X}_0(u)$, is unique with respect to $t\in[0,T]$ and $\mb{X}\in\Gamma_t$.
\end{proof}

\section{Semi-analytical examples of periodic solutions}

In this section, we present two examples of the evolution of a circular curve with no torsion evolving in rotating and parallel planes. The motion of the curves is coupled by a scalar quantity $\varrho$. The method of construction of a solution in a special separated form and its reduction to solving a system of ODEs has been proposed in the recent paper by Bene\v{s}, Kol\'a\v{r} and \v{S}ev\v{c}ovi\v{c} in the context of evolving 3D curves with interactions \cite{BKS2022}. 
We consider the following system of coupled equations governing the motion of curves in 3D and scalar quantity:
\begin{equation}
\label{example-system}
\begin{split}
 &\partial_t \mb{X} = \beta \mb{N}  + \gamma \mb{B}  + \alpha \mb{T}, \\
 & \partial_t \varrho + (\partial_s\alpha - \kappa\beta)\varrho = \partial_s^2\varrho + q.
\end{split}
\end{equation}
For simplicity, we assume no tangential velocity in (\ref{example-system}), i.e. $\alpha=0$. The normal velocity $\beta$, the binormal velocity $\gamma$, and source term $q$ are assumed to depend on the shape of the curve $\Gamma_t=\{\mb{X}(u,t), u\in I\}$ and the scalar quantity $\varrho$. In this example, we shall assume a specific choice:
\begin{equation}
\beta = \kappa - \frac{2\pi}{L(\Gamma_t)} - \tilde{P}(L(\Gamma_t), \Vert\varrho\Vert_2), \quad  q = \frac{\varrho}{\sqrt{2}\Vert\varrho\Vert_2}, 
\label{example-beta}
\end{equation}
where $\tilde{P}$ is a smooth function and $\Vert\varrho\Vert_2 =\left(\int_0^1\varrho(u)^2 du\right)^\frac12$ is the $L^2$-norm of $\varrho$.

If the family  $\{\Gamma_t, t\ge 0\}$  of curves evolves in parallel planes, then the area $A(\Gamma_t)$ enclosed by the curve $\Gamma_t$ satisfies $\frac{d}{dt} A(\Gamma_t) = -\int_{\Gamma} \beta  ds $ (see \cite[Prop. 2]{BKS2022}). Since $\int_{\Gamma}\kappa ds = 2\pi$ for a curve evolving in the plane, we have $\int_{\Gamma} \beta  ds = 0$ provided that $\beta = \kappa - \frac{2\pi}{L(\Gamma_t)}$ (see Gage \cite{Gage86}). In such a case, the enclosed area $A(\Gamma_t)$ is preserved during evolution. In the next two examples, we will show that a perturbation of the normal velocity $\beta = \kappa - \frac{2\pi}{L(\Gamma_t)}$ and consideration of the influence of the scalar quantity $\varrho$ may lead to periodic motion of the curves and the scalar quantity. 

\subsection{An example of coupled system of parabolic equations for evolution of a curve and scalar quantity illustrating a supercritical Hopf bifurcation}

In the first example, we construct the solution $\mb{X}$ in the form of circular curves with radius $r$ evolving in a rotating plane. Define the vectors $\hat{\mb{X}}(u) = (\cos 2\pi u, \sin 2\pi u, 0)^T$ and $\hat{\mb{X}}^\perp(u) = (-\sin 2\pi u, \cos 2\pi u, 0)^T$.  Assume that a $3\times 3$ orthogonal matrix $Q$ and a positive scalar $r>0$ are time-dependent functions. Let the family of evolving curves $\Gamma_t=\{\mb{X}(u,t), u\in I, t\ge 0\}$ be defined as follows:
\begin{equation}
\mb{X}(u,t) = r(t) Q(t) \hat{\mb{X}}(u) .
\label{example-X}
\end{equation}
Then, for the curvature $\kappa$, arc-lenth parametrization $s$, tangent $\mb{T}$, normal $\mb{N}$ and binormal $\mb{B}$ vectors we have the following:
\[
\kappa=\vert\partial_s^2\mb{X}\vert = 1/r, \quad ds=2\pi r\, du, \quad \mb{T}= Q \hat{\mb{X}}^\perp, \quad \mb{N}= -Q \hat{\mb{X}}, \quad \mb{B}= Q e_3, \quad e_3=(0,0,1)^T.
\]
Assume that the function $\omega(t)$ satisfies $r(t)\frac{d\omega}{dt}(t)=1$. Then, it is easy to verify that if
\[
Q(t) = \begin{pmatrix}
\cos \omega(t) & 0  & -\sin \omega(t)\\
0 & 1 & 0 \\
\sin \omega(t) & 0  &  \cos \omega(t)\\
\end{pmatrix}
\ \ \text{then}\ \  Q Q^T = Q^T Q =I, \ \ 
r \frac{d}{dt}Q = Q \begin{pmatrix}
0 & 0 & -1\\
0 & 0 & 0 \\
1 & 0 & 0\\
\end{pmatrix} .
\] 
Therefore, $$\partial_t\mb{X} = \frac{d}{dt}(r Q) \hat{\mb{X}} = \frac{dr}{dt} Q \hat{\mb{X}} + r \frac{d}{dt}Q \hat{\mb{X}} = - \frac{dr}{dt} \mb{N} + \gamma Q e_3 = \beta \mb{N} + \gamma \mb{B},$$ where $\gamma=\cos 2\pi u $ and $\beta = - \frac{dr}{dt}$. We search for the solution $\varrho$ in separated form:
\begin{equation}
    \varrho(u,t)=a(t) \cos 2\pi u, 
\label{example-rho}
\end{equation}
where $a=a(t)\ge 0$ is a nonnegative amplitude. As $\Vert\varrho(\cdot,t)\Vert_2 =\left(\int_0^1 a(t)^2 (\cos 2\pi u)^2 du\right)^\frac12 = \frac{a(t)}{\sqrt{2}}$ we have $\gamma=q =  \frac{\varrho}{\sqrt{2}\Vert\varrho\Vert_2} = \cos 2\pi u$.  Therefore, for the normal velocity $\beta$ given by (\ref{example-beta}) we have $\beta = 1/r - 2\pi/(2\pi r) -  \tilde{P}(\Gamma_t, \varrho) =  - P(r, a)$, where $P(r,a) = \tilde{P}(L(\Gamma_t), \Vert \varrho\Vert_2)$. 
Since $ds = 2\pi r du$ we have $\partial_s^2 \varrho = (2\pi r)^2 \partial_u^2 \varrho = - a r^{-2}\cos 2\pi u$. Therefore, 
\[ \partial _t \varrho - \kappa\beta\varrho - \partial_s^2\varrho 
=
\left( \frac{d}{dt} a + \frac{a}{r} P(r,a)+ \frac{a}{r^2} \right) \cos 2\pi u.
\]
As a consequence, the pair $(\mb{X}, \varrho)$ is a solution to the coupled evolutionary equation (\ref{example-system}) if and only if the radius $r$ and the amplitude $a$ satisfy the planar ODE system:

\begin{equation}
\label{example-system2D}
 \frac{d}{dt} r = P(r, a), \qquad
 \frac{d}{dt} a = -\frac{a}{r} P(r,a) -  \frac{a}{r^2}  + 1 .
\end{equation}
The system (\ref{example-system2D}) of ODEs has a non-trivial steady state $(\tilde{r},\tilde{a})$, i.e. the time-independent solution,  where 
\[
\tilde{a} = \tilde{r}^2, \qquad P(\tilde{r}, \tilde{r}^2) =0. 
\]
Linearization $\mathcal{A}$ of the right-hand3 side of (\ref{example-system2D}) in steady state $(\tilde{r},\tilde{a})$ has the form of a $2\times 2$ matrix $\mathcal{M}$:
\[
\mathcal{A} = 
\begin{pmatrix}
\frac{\partial P}{\partial r} (\tilde{r}, \tilde{a}) &  \frac{\partial P}{\partial a} (\tilde{r}, \tilde{a}) &
\\
\\
- \frac{\tilde{a}}{\tilde{r}} \frac{\partial P}{\partial r} (\tilde{r}, \tilde{a}) 
+ 2 \frac{\tilde{a}}{\tilde{r}^3} &  - \frac{\tilde{a}}{\tilde{r}} \frac{\partial P}{\partial a} (\tilde{r}, \tilde{a}) -\frac{1}{\tilde{r}^2}
\end{pmatrix} .
\]
Now, as $\tilde{a} = \tilde{r}^2$ we have
\[
trace(\mathcal{A}) = \frac{\partial P}{\partial r} (\tilde{r}, \tilde{a}) 
- \tilde{r} \frac{\partial P}{\partial a} (\tilde{r}, \tilde{a})  -\frac{1}{\tilde{r}^2}, \qquad
det(\mathcal{A}) = -\frac{1}{\tilde{r}^2}\left( 
\frac{\partial P}{\partial r} (\tilde{r}, \tilde{a}) + 2 \tilde{r} \frac{\partial P}{\partial a} (\tilde{r}, \tilde{a}) 
\right).
\]
Assume that the source term $\tilde{P}$ has the specific form: 
\[
\tilde{P}(L(\Gamma_t), \Vert\varrho\Vert_2) = \frac{L^2}{4\pi^2} - \lambda\sqrt{2}\Vert\varrho\Vert_2 +1, \quad \text{that is},\quad P(a,r)= r^2 - \lambda a +1,
\]
where $\lambda>1$ is a parameter. Then, for the steady-state solution $(\tilde{r},\tilde{a})$ we have $\tilde{r}=1/\sqrt{\lambda-1}, \tilde{a}=1/(\lambda-1)$. 
\[
trace(\mathcal{A}) = \frac{\lambda+2}{\sqrt{\lambda-1}} - \lambda +1, \qquad
det(\mathcal{A}) = (\lambda-1)^{3/2}.
\]
The nonlinear equation $trace(\mathcal{A}) = 0$ has a unique solution for the parameter value $\lambda_0=4.473402$. Since $trace(\mathcal{A}) < 0$ for $\lambda>\lambda_0$, and $trace(\mathcal{A}) > 0$ for $\lambda<\lambda_0$. Hence, the supercritical Hopf bifurcation occurs when the parameter $\lambda$ crosses the critical value $\lambda_0$. From a stable focus steady state $(\tilde{r}, \tilde{a})$ for $\lambda>\lambda_0$ we observe a bifurcation to a stable period orbit at $\lambda_0$ that persists to exist for $\lambda<\lambda_0$ (see Fig.~\ref{fig-hopf} and Fig.~\ref{fig-hopf-3D-2}).

\begin{figure}
\begin{center}
\includegraphics[width=0.32\textwidth]{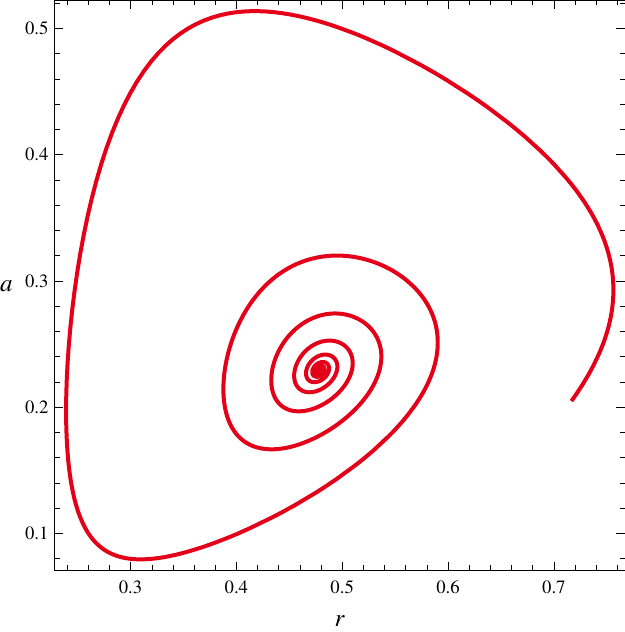}
\includegraphics[width=0.32\textwidth]{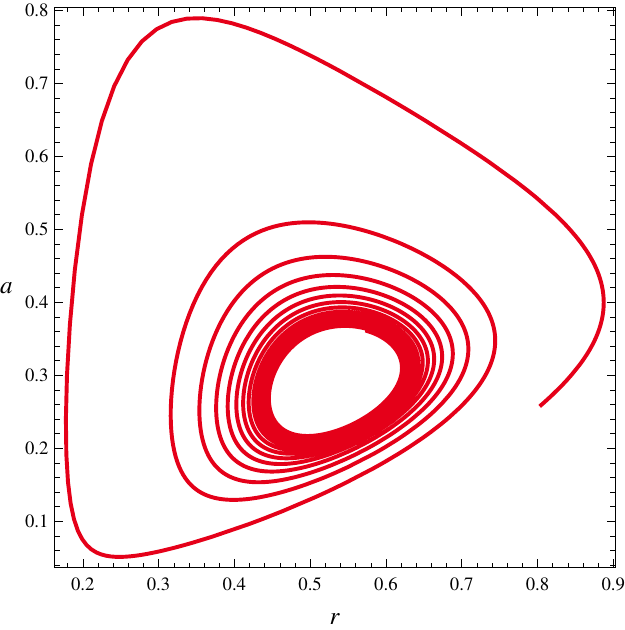}
\includegraphics[width=0.32\textwidth]{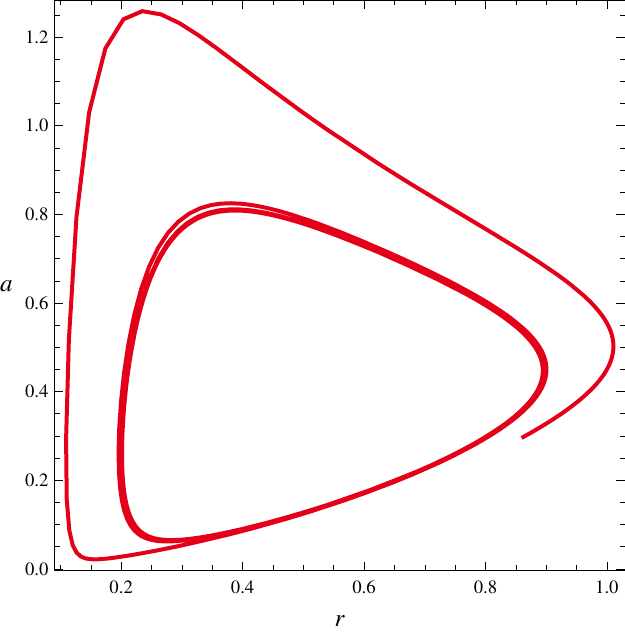}

\includegraphics[width=0.32\textwidth]{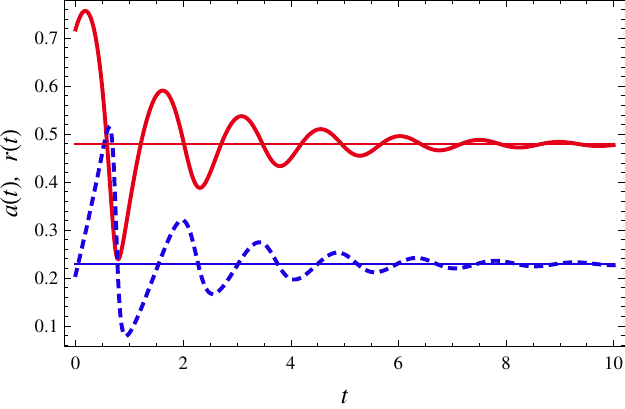}
\includegraphics[width=0.32\textwidth]{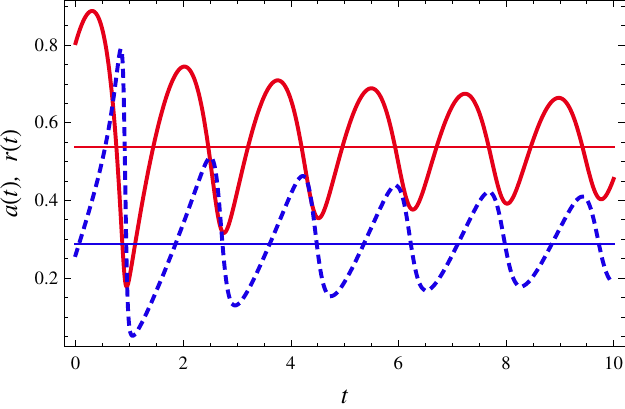}
\includegraphics[width=0.32\textwidth]{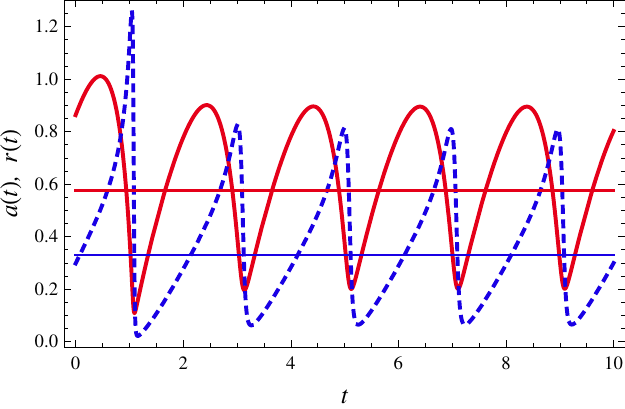}

\caption{\small Top row: phase portraits of solutions of (\ref{fig-hopf}). Bottom row: time dependent trajectories of solutions for the parameter values $\lambda=5.36808$ (left), $\lambda= \lambda_0= 4.473402$ (middle), and $\lambda=4.02606$ (right).
}

\label{fig-hopf}
\end{center}
\end{figure}

\subsection{Another  example of a coupled system of parabolic equations for evolution of a curve and scalar quantity}

In this example, we will construct a family of curves evolving in parallel planes and a scalar quantity $\varrho$
such that it solves the coupled system of evolution equations (\ref{example-system}) with binormal velocity $\gamma = -\beta$.
Assume that the family of circular curves $\Gamma_t=\{\mb{X}(u,t), u\in[0,1]\}$ evolves in parallel planes and is parameterized by
\[
\mb{X}(u,t) = (r(t) \cos 2\pi u, r(t) \sin 2\pi u, r(t))^T, \qquad \varrho(u,t)= a(t) \cos 2\pi u.
\]
Similarly to the previous example, we consider the normal velocity $\beta$ and the external source term $q$ of the form (\ref{example-beta}). Since $\mb{N}= - (\cos 2\pi u, \sin 2\pi u, 0)^T$ and $\mb{B}=(0,0,1)^T$ it is easy to verify that $(\mb{X}, \varrho)$ is a solution to (\ref{example-system}) with binormal velocity $\gamma=-\beta$ if and only if $\frac{d}{dt} r = - \beta$ and amplitude $a$ satisfies the ODE: $\frac{d}{dt} a - \frac{a}{r} \beta + \frac{a}{r^2}  =1$. Therefore, for the normal velocity $\beta$ given by (\ref{example-beta}) we have $\beta = 1/r - 2\pi/(2\pi r) -  \tilde{P}(\Gamma_t, \varrho) =  - P(r, a)$, where $P(r,a) = \tilde{P}(\Gamma_t, \Vert\varrho\Vert_2) =  r^2 - \lambda a +1$. It means that the pair of functions $(r,\varrho)$ satisfies the system of ODEs (\ref{example-system2D}) that exhibits a Hopf supercritival bifurcation at the parametrix value
$\lambda=4.02606<\lambda_0= 4.473402$. In Fig.~\ref{fig-hopf-3D-1} we present an example of the evolution of concentric circles with radius $r$ converging to $\tilde{r}=1/\sqrt{\lambda-1}$ and the amplitude of the scalar quantity $a$ converging to $\tilde{a}=1/(\lambda-1)$ for the parameter value $\lambda=5.36808>\lambda_0$.

\begin{figure}
\begin{center}
\includegraphics[width=0.24\textwidth]{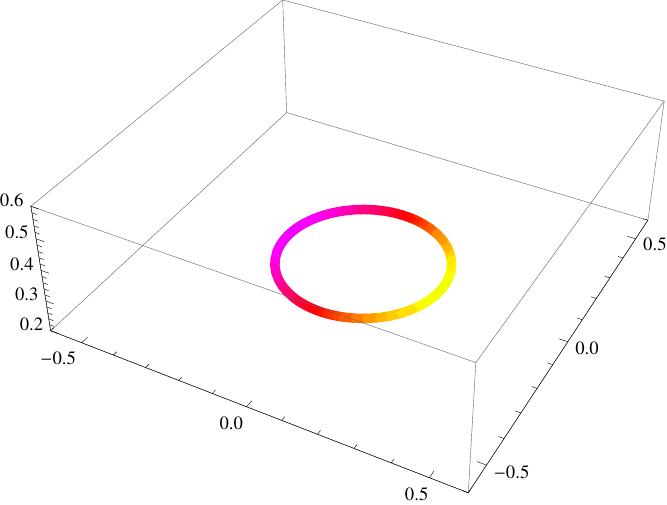}
\includegraphics[width=0.24\textwidth]{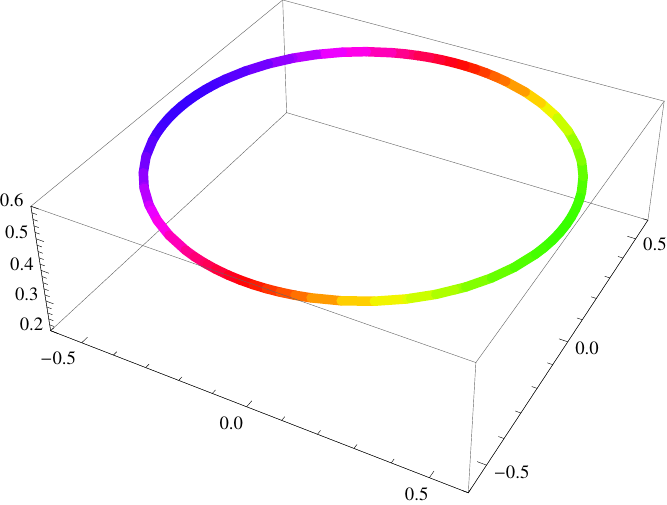}
\includegraphics[width=0.24\textwidth]{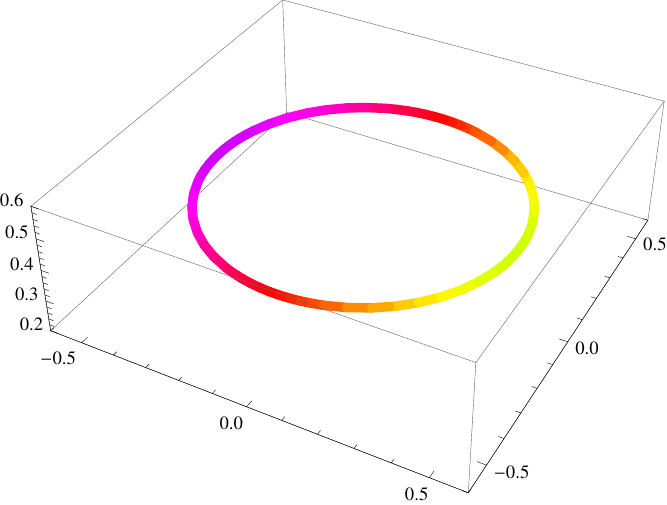}
\includegraphics[width=0.24\textwidth]{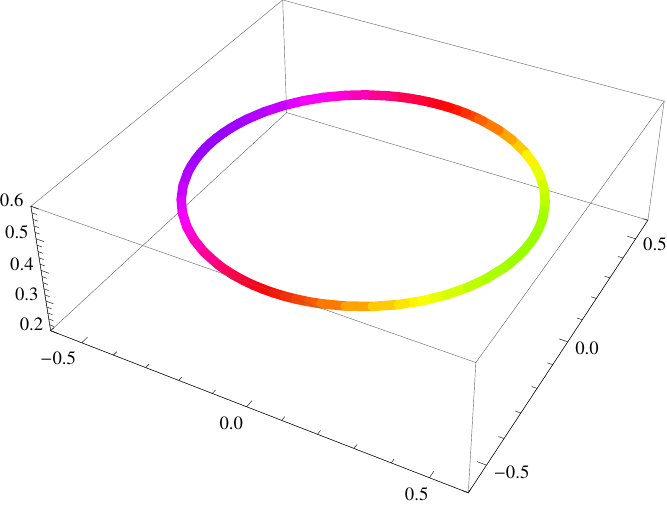}

{\scriptsize $t=0.8$ \hglue 3truecm $t=1.8$ \hglue 3truecm $t=4$ \hglue 3truecm $t=8$ }

\caption{\small Evolution of circular curves $\Gamma_t$ and the scalar quantity $\varrho(\cdot,t)$ (displayed as a colored field) that solve the coupled system (\ref{example-beta}) with the parameter value $\lambda=5.36808>\lambda_0$.}

\label{fig-hopf-3D-1}
\end{center}
\end{figure}

\begin{figure}
\begin{center}
\includegraphics[width=0.24\textwidth]{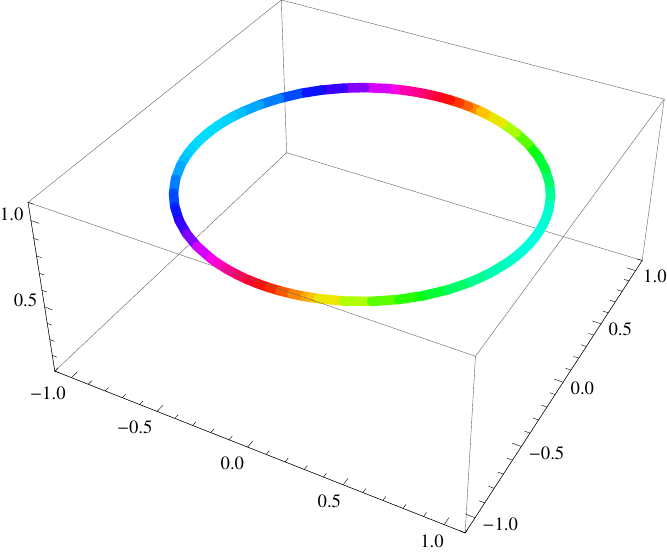}
\includegraphics[width=0.24\textwidth]{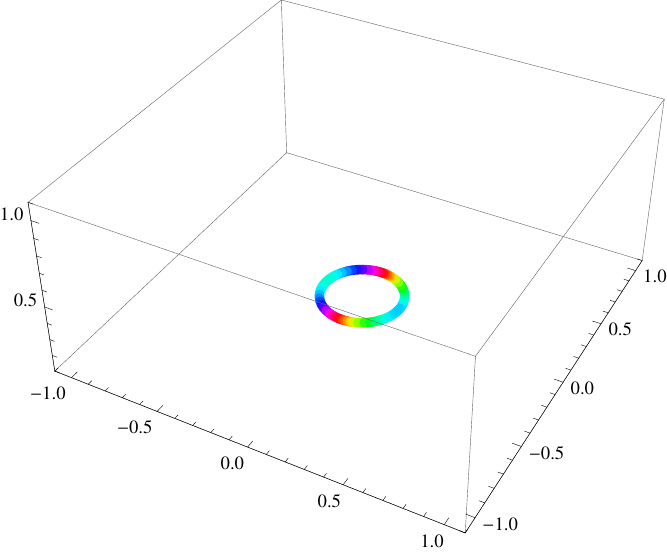}
\includegraphics[width=0.24\textwidth]{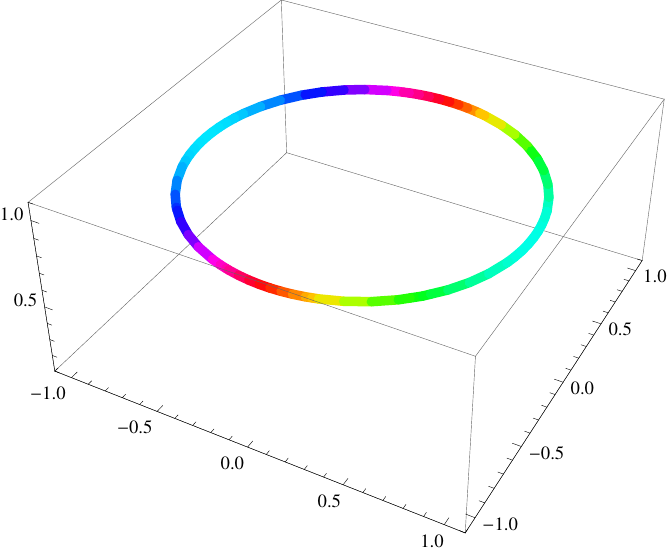}
\includegraphics[width=0.24\textwidth]{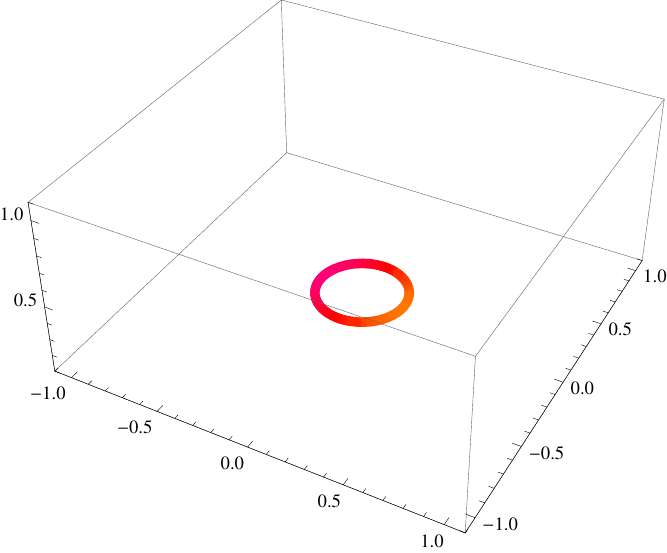}

{\scriptsize $t=2.5$ \hglue 3truecm $t=3.1$ \hglue 3truecm $t=4.4$ \hglue 3truecm $t=5.2$ }
\caption{\small Evolution of circular curves $\Gamma_t$ and the scalar quantity $\varrho(\cdot,t)$ (displayed as a coloured field)that solve  the coupled system (\ref{example-beta}) with the parameter value $\lambda=4.02606<\lambda_0$.}
\label{fig-hopf-3D-2}
\end{center}
\end{figure}

\section{Numerical discretization scheme based on the method of flowing finite volumes and the method of lines} \label{Sec:Num}
In this section, we present a numerical discretization scheme to solve the system of equations (\ref{eq:ab}). Our full space-time discretization scheme is based on a combination of the method of lines and the flowing finite-volume method of spatial discretization. The flowing finite-volume discretization was proposed by Mikula and \v{S}ev\v{c}ovi\v{c} \cite{sevcovic2001evolution} for the evolution of curves in the plane. It was further generalized and analyzed for evolving curves in 3D by Bene\v{s}, Kol\'a\v{r} and \v{S}ev\v{c}ovi\v{c} in \cite{BKS2020, BKS2022}. 

\subsection{Method of flowing finite volumes} \label{m-ffv} 
In this section, we describe the discretization method in more detail. We consider $M$ discrete nodes $\mb{X}_k$ approximating points $\mb{X}(u_k)$,  $k = 0,1, \ldots, M$, along the curve $\Gamma_t=\{\mb{X}(u), u\in I\}$, $u_k=k/M$. Similarly, we approximate the values of the scalar quantity $\varrho_k = \varrho(u_k)$. The corresponding dual nodes are defined as $\mb{X}_{k \pm \frac12} = \mb{X}(u_{k \pm \frac12})$  (see Fig.~\ref{FVMfig}). Here, $ u_{k \pm \frac12} = u_k \pm \frac{h}2$, where $h=1/M$, and $(\mb{X}_k + \mb{X}_{k+1}) / 2$ denote averages on segments connecting discrete nodes in the vicinity. It may differ from the position $\mb{X}_{k +\frac12} \in \Gamma_t$. The $k$-th segment between $\mb{X}_{k-1}$ and $\mb{X}_{k}$ represents the finite volume, while the segment between $\mb{X}_{k-1/2}$ and $\mb{X}_{k+1/2}$ represents the dual finite volume. Integration of equations (\ref{eq:ab}) for $(\mb{X},\varrho)$ in the dual volume between the dual nodes $\mb{X}_{k - \frac12}$ and $\mb{X}_{k + \frac12}$ yields the following result:
\begin{equation}
\label{eq:num_int}
\begin{split}
& \int_{u_{k-\frac12}}^{u_{k+\frac12}} \partial_t \mb{X} |\partial_u \mb{X}| d u = 
\int_{u_{k-\frac12}}^{u_{k+\frac12}} a\frac{\partial}{\partial u} \left( \frac{\partial_u \mb{X}}{|\partial_u \mb{X}|} \right) + b (\partial_s \mb{X} \times \partial_s^2 \mb{X}) |\partial_u \mb{X}| + \mb{F} |\partial_u \mb{X}| \ d u,
\\
& \int_{u_{k-\frac12}}^{u_{k+\frac12}} \partial_t \varrho |\partial_u \mb{X}| d u = 
\int_{u_{k-\frac12}}^{u_{k+\frac12}} c\frac{\partial}{\partial u} \left( \frac{\partial_u \varrho}{|\partial_u \mb{X}|} \right)  + G |\partial_u \mb{X}| \ d u .
\end{split}
\end{equation}
We denote $d_k = |\mb{X}_k - \mb{X}_{k-1}|$, $d_{M+1}=d_1$,  and $d_{k+\frac12} =(d_{k+1} + d_k)/2$ for $k=0, 1,\ldots,M$, where $\mb{X}_0 = \mb{X}_M$ and $\mb{X}_1 = \mb{X}_{M+1}$ for a closed curve $\Gamma_t$. We approximate the integral terms in (\ref{eq:num_int}) using the flowing finite-volume method. We assume that the quantities $\partial_t \mb{X}, \partial_u \mb{X}, \mb{F}, \partial_t \varrho, \partial_u \varrho$ and $G$ are constant in the finite volume between the nodes $\mb{X}_{k + \frac12}$ and $\mb{X}_{k - \frac12}$, and taking the values $\partial_t \mb{X}_k, \partial_u \mb{X}_k, \mb{F}_k, \partial_t \varrho_k, \partial_u \varrho_k$, and $G_k$ an respectively. Then the approximation of the terms of the first equation in (\ref{eq:num_int}) reads as follows:
\begin{equation}
\begin{split}
& \int_{u_{k-\frac12}}^{u_{k+\frac12}} \partial_t \mb{X} |\partial_u \mb{X}| d  u
\approx \frac{d  \mb{X}_k}{d t} d_{k+\frac12},
\\
& \int_{u_{k-\frac12}}^{u_{k+\frac12}}
a \partial_u \left( \frac{\partial_u \mb{X}}{|\partial_u \mb{X}|} \right) d u
\approx
a_k \left(\frac{\mb{X}_{k+1} - \mb{X}_k}{d_{k+1}} - \frac{\mb{X}_k - \mb{X}_{k-1}}{d_k}\right),
\\
& \int_{u_{k-\frac12}}^{u_{k+\frac12}}
b (\partial_s \mb{X} \times \partial_s^2 \mb{X}) |\partial_u \mb{X}| d u
\approx b_k d_{k+\frac12} \kappa_k \mb{B}_k,
\ \ 
\int_{u_{k-\frac12}}^{u_{k+\frac12}} \mb{F} |\partial_u \mb{X}| d u
\approx  \mb{F}_k  d_{k+\frac12}.
\end{split}
\label{discretization}
\end{equation}

\begin{figure}
\begin{center}
\includegraphics[width=0.3\textwidth]{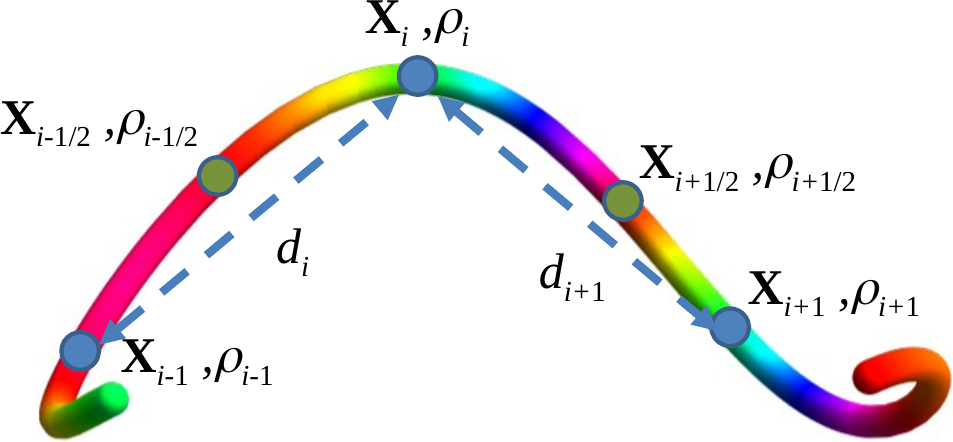}
\end{center}
\caption{Discretization of a segment of a 3D curve by the method of flowing finite volumes.
}
\label{FVMfig}
\end{figure}

Analogously, we can approximate the terms entered into the second equation in (\ref{eq:num_int}) for the scalar quantity $\varrho$. Approximations of the non-negative curvature $\kappa = |\partial^2_s \mb{X}|$, tangent vector $\mb{T}=\partial_s \mb{X}$, normal vector $\mb{N} = \partial^2_s \mb{X} / |\partial^2_s \mb{X} |$ and binormal vector $\mb{B}=\mb{T}\times\mb{N}$ read as follows:
\begin{equation}
\label{krivost}
\begin{split}
&\kappa_k  \approx  
\left|\frac{2}{d_k + d_{k+1}}
\left(\frac{\mb{X}_{k+1} - \mb{X}_k}{d_{k+1}} - \frac{\mb{X}_k - \mb{X}_{k-1}}{d_k}\right)
\right|, 
\  \mb{T}_k \approx \frac{\mb{X}_{k+1} - \mb{X}_{k-1}}{d_{k+1}+d_k}, 
\\
& \mb{N}_k  \approx 
\left(\frac{\mb{X}_{k+1} - \mb{X}_k}{d_{k+1}} - \frac{\mb{X}_k - \mb{X}_{k-1}}{d_k}\right)
\big/
\left|\frac{\mb{X}_{k+1} - \mb{X}_k}{d_{k+1}} - \frac{\mb{X}_k - \mb{X}_{k-1}}{d_k} \right|.
\end{split}
\end{equation}
In approximation $\mb{F}_k$ of the vector-valued function $\mb{F}$, we assume that the curve $\Gamma_t$ entering the definition of $\mb{F}$ is approximated by a polygonal curve with vertices $(\mb{X}_0, \mb{X}_1, \ldots, \mb{X}_M)$. The semi-discrete scheme for solving (\ref{eq:ab}) can be written as follows:
\begin{equation}
\label{eq:DCScheme}
\begin{split}
\frac{d \mb{X}_k}{d t}
&= a_k \frac{1}{d_{k+\frac12}}\left(\frac{\mb{X}_{k+1} - \mb{X}_k}{d_{k+1}} - \frac{\mb{X}_k - \mb{X}_{k-1}}{d_k}\right) + b_k  \kappa_k \mb{B}_k + \mb{F}_k,
\\
\frac{d \varrho_k}{d t}
&= a_k \frac{1}{d_{k+\frac12}}\left(\frac{\varrho_{k+1} - \varrho_k}{d_{k+1}} - \frac{\varrho_k - \varrho_{k-1}}{d_k}\right) + G_k,
\\
\mb{X}_k(0) &= \mb{X}_{ini}(u_k), \quad \varrho_k(0) = \varrho_{ini}(u_k), \quad \text{for} \ k = 1, \ldots, M.
\end{split}
\end{equation}
The resulting system (\ref{eq:DCScheme}) of ODEs  is numerically solved by means of the 4th-order explicit Runge-Kutta-Merson scheme with automatic time-stepping control and the tolerance parameter $10^{-3}$ (see \cite{0965-0393-24-3-035003}). We chose the initial time step as $4h^2$, where $h=1/M$ is the spatial mesh size.

\subsection{Experimental order of convergence}\label{eoc}
We test the numerical scheme proposed in Section~\ref{m-ffv} on a simple example in which we choose $\beta=\kappa, \gamma=0, \mb{F}=0$ and $G=-\partial_s(v\varrho)+\varrho^3+g$,
\begin{equation}
\label{eq:eoc}
\begin{split}
 &\partial_t \mb{X} = \partial^2_s\mb{X}, \\
 & \partial_t \varrho - \kappa^2\varrho = \partial_s^2\varrho -  \partial_s(v \varrho) + \varrho^3 + g, 
\end{split}
\end{equation}
where $v=-10$ and the function $g(u,t)$ is chosen such that the prescribed function $\varrho(u,t) = \cos(t)(\sin(2 \pi u) + \sin(4 \pi u))$ is the analytical solution of (\ref{eq:eoc}) satisfying the initial condition $\mb{X}(u,0)=(\cos 2\pi u, \sin 2\pi u,  0)^T, \varrho(u,0)= \sin(2 \pi u) + \sin(4 \pi u)$. Here $t\in[0,T]$ where $T=45$. 
Assuming that error estimates $err(M) = const (1/M)^{EOC}$ depend on the number of finite volumes $M$, the value of the experimental order of convergence (EOC) between two levels of meshes containing $M_1$ and $M_2$ finite
volumes is given by
\[
EOC = \frac{\log( err(M_1)/err(M_2))}{\log(M_2/M_1)}.
\]
The computational results are summarized in Table~\ref{tab:eoc}. They indicate the second order of convergence of the proposed numerical scheme. 

\begin{table}[ht]

\begin{center}
    
\caption{\small The experimental order of convergence for errors measured in the discrete $L^1$ and $L^\infty$ norms.}

\label{tab:eoc}

\scriptsize
\begin{tabular}{p{0.6cm}|p{2cm}|p{1.5cm}|p{2cm}|p{1.5cm}}
\hline
\multicolumn{1}{c|}{}    & \multicolumn{2}{c|}{$L^1((0,T); L^\infty(S^1))$}    &   \multicolumn{2}{c}{$L^\infty((0,T); L^\infty(S^1))$}  \\
\hline
$M$ &   error norm &  EOC  & error norm   &  EOC    \\
\hline
100   & 1.2975$\cdot 10^{-3}$ & -- & 2.8703$\cdot 10^{-3}$ & -- \\
200   & 3.3382$\cdot 10^{-4}$ & 1.9586 & 7.4222$\cdot 10^{-4}$ &  1.9513 \\
300   & 1.5312$\cdot 10^{-4}$ & 1.9222 & 3.4151$\cdot 10^{-4}$ &  1.9145 \\
400   & 8.9079$\cdot 10^{-5}$ & 1.8830 & 1.9907$\cdot 10^{-4}$  & 1.8760 \\
500   & 5.9055$\cdot 10^{-5}$ & 1.8421 & 1.3213$\cdot 10^{-4}$ &  1.8368 \\
\hline
\end{tabular}

\end{center}

\end{table}

\section{Numerical examples}\label{sec-numescheme}
In this section we present two examples of the evolution of closed curves in 3D. In the first example, we consider a simple curve without knots. On the other hand, in the second example we investigate the evolution of initial knotted curves. 

\subsection{An example of an evolution of 3D simple unknotted curves and scalar quantity}\label{sec:simpleex}
In the first numerical example, we consider a system of governing equations where the normal and binormal velocities and the external source term are given by $\beta = \kappa, \gamma=0$ and $\mb{F}=0$. Since $\beta\mb{N}=\kappa\mb{N}=\partial^2_s \mb{X}$, i.e.  
\begin{equation}
\label{example-system-num}
\begin{split}
 &\partial_t \mb{X} = \partial^2_s \mb{X},   \\
 & \partial_t \varrho - \kappa^2\varrho = \partial_s^2\varrho .
\end{split}
\end{equation}
A solution $(\mb{X},\varrho)$ is subject to the initial condition $\mb{X}(u,0) = (\cos 2\pi u, \sin 2 \pi, \sin 8 \pi u)^T$, $\varrho(u,0)=1$ if $u\in (1/4, 3/4)$ and $\varrho(u,0)=0$, otherwise. The evolution of the simple unknotted curve is shown in Fig.~\ref{fig4}. The values of the scalar quantity $\varrho$ are displayed by the color function. In the subsequent figures shown in Fig.~\ref{fig4} we depict the evolution of the initial curve approaching the shrinking circle for time levels $t\in \{0.062, 0.124, 0.19, 0.25\}$.

\begin{figure}
\begin{center}
\includegraphics[width=0.3\textwidth]{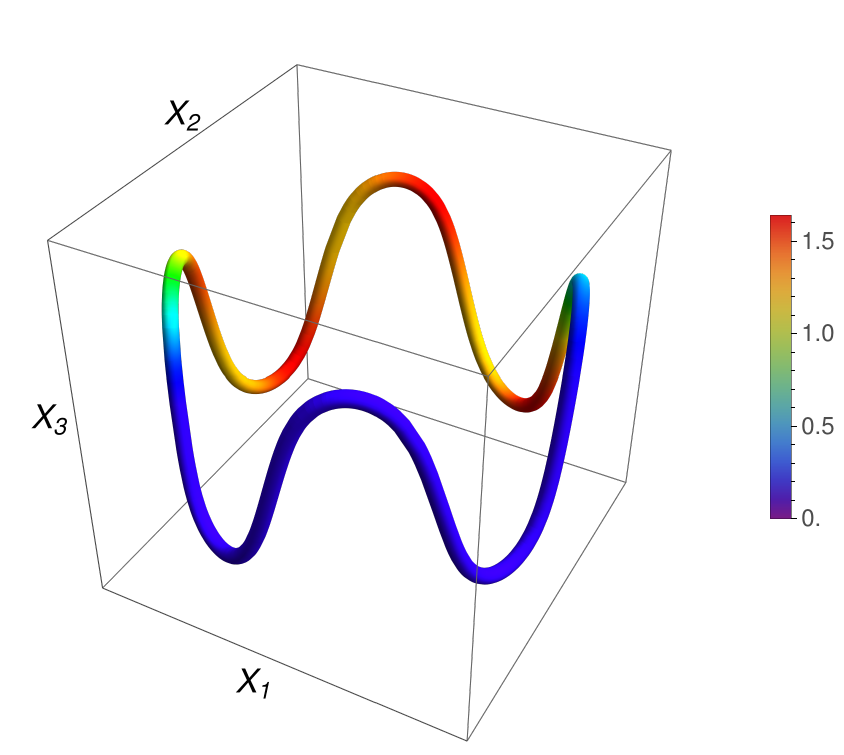}
\qquad\qquad
\includegraphics[width=0.3\textwidth]{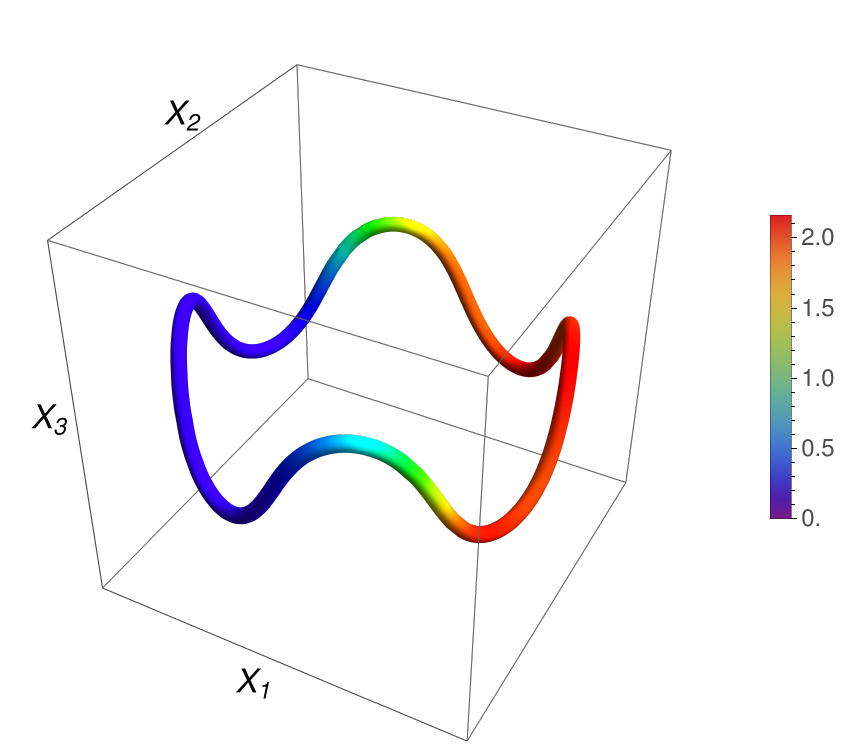}

\centerline{\scriptsize  $t = 0.062$ \hspace{0.35\textwidth} $t = 0.124$}

\medskip

\includegraphics[width=0.3\textwidth]{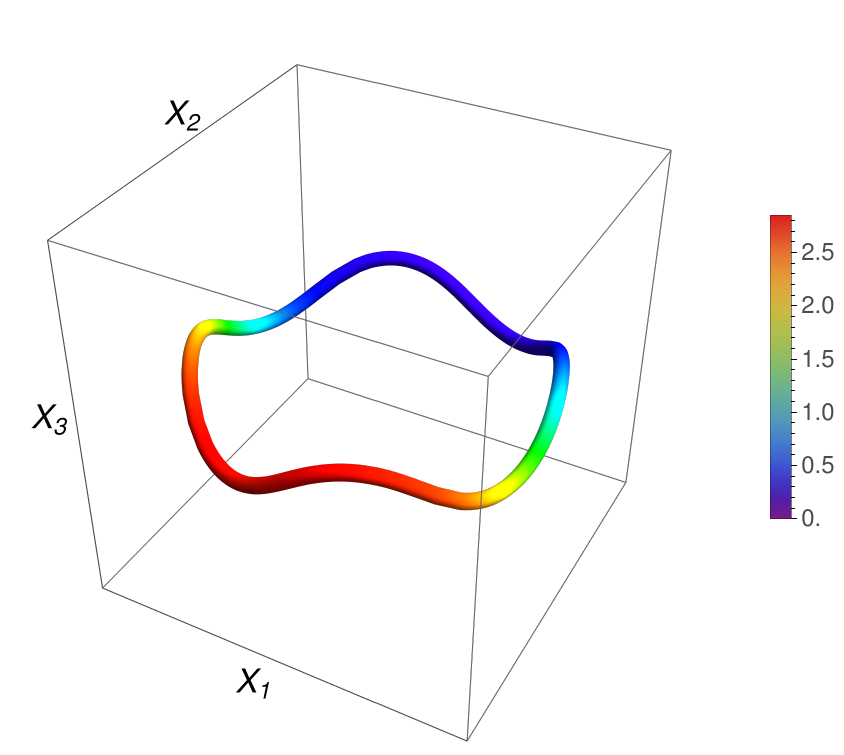}
\qquad\qquad
\includegraphics[width=0.3\textwidth]{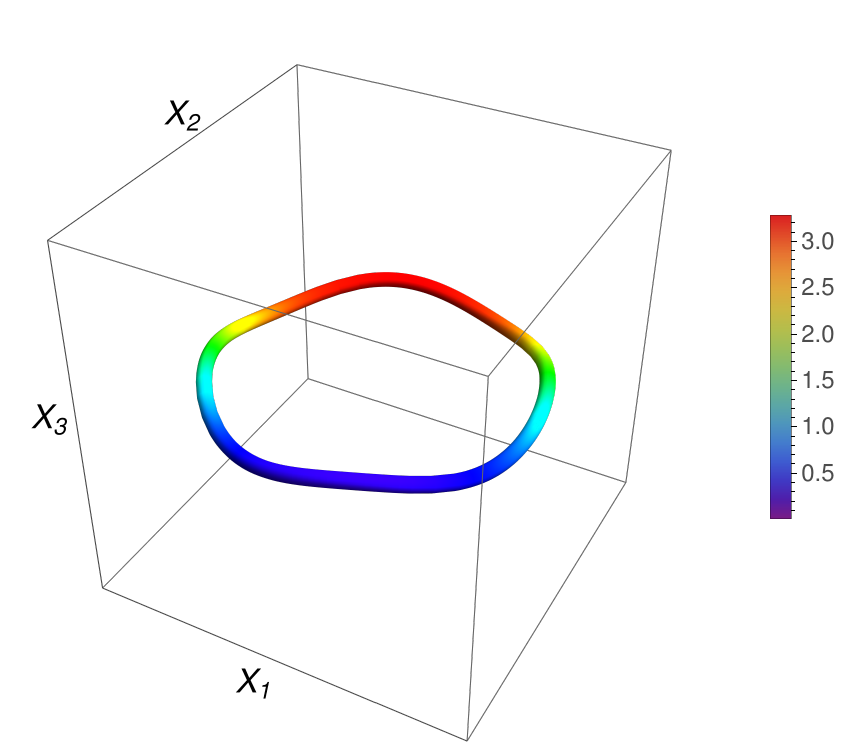}

\centerline{\scriptsize  $t = 0.19$ \hspace{0.35\textwidth} $t = 0.25$}

\caption{Numerical solution of problem (\ref{eq:balance_differential}) with advection velocity $v=20$ and $F=0$. Evolving curves approach the planar curve shortening the flow shrinking to a point in finite time.}

\label{fig4}
\end{center}
\end{figure}

\subsection{An example of evolution of knotted curves and scalar quantity}\label{sec-knotted}

A Fourier knot is a closed curve in 3D that can be parameterized by a finite Fourier series in the parameter $u\in[0,1]$. As an example, we consider a figure-eight knot (also called Listing's knot) which can be parameterized by the following finite trigonometric series:
\begin{equation}
\mb{X}(u,0) = (\cos(4 \pi u), \ 
\sin(6 \pi u + 1/2), \ 
( \cos(10 \pi u + 1/2) + \sin(6 \pi u + 1/2)) /2  )^T,
\label{init-curve}
\end{equation}
$u\in[0,1]$. Its top-view shape is shown in Fig.~\ref{fig-fourier} (left). 
\begin{figure}
\begin{center}
\includegraphics[width=0.2\textwidth]{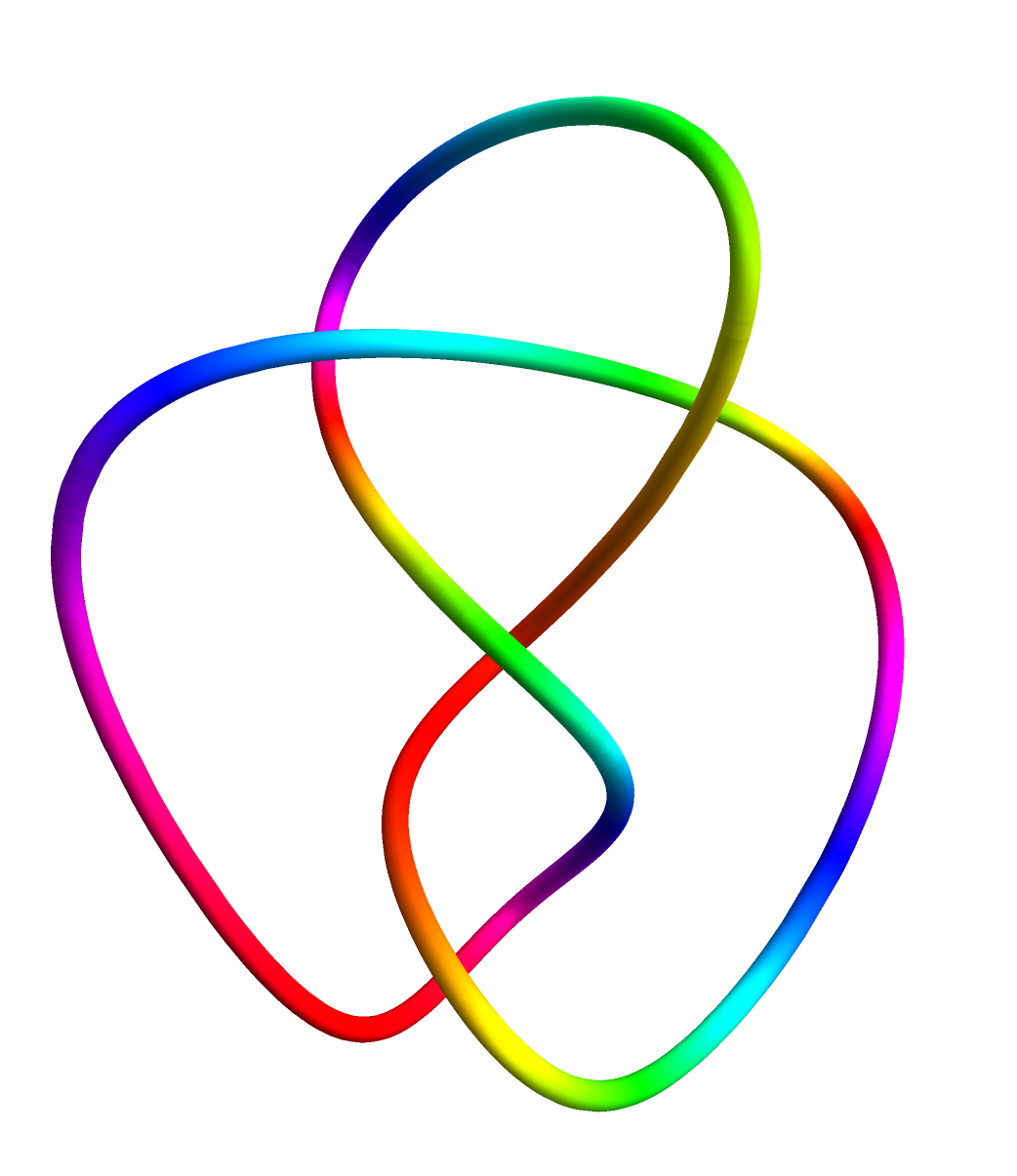}
\qquad
\includegraphics[width=0.2\textwidth]{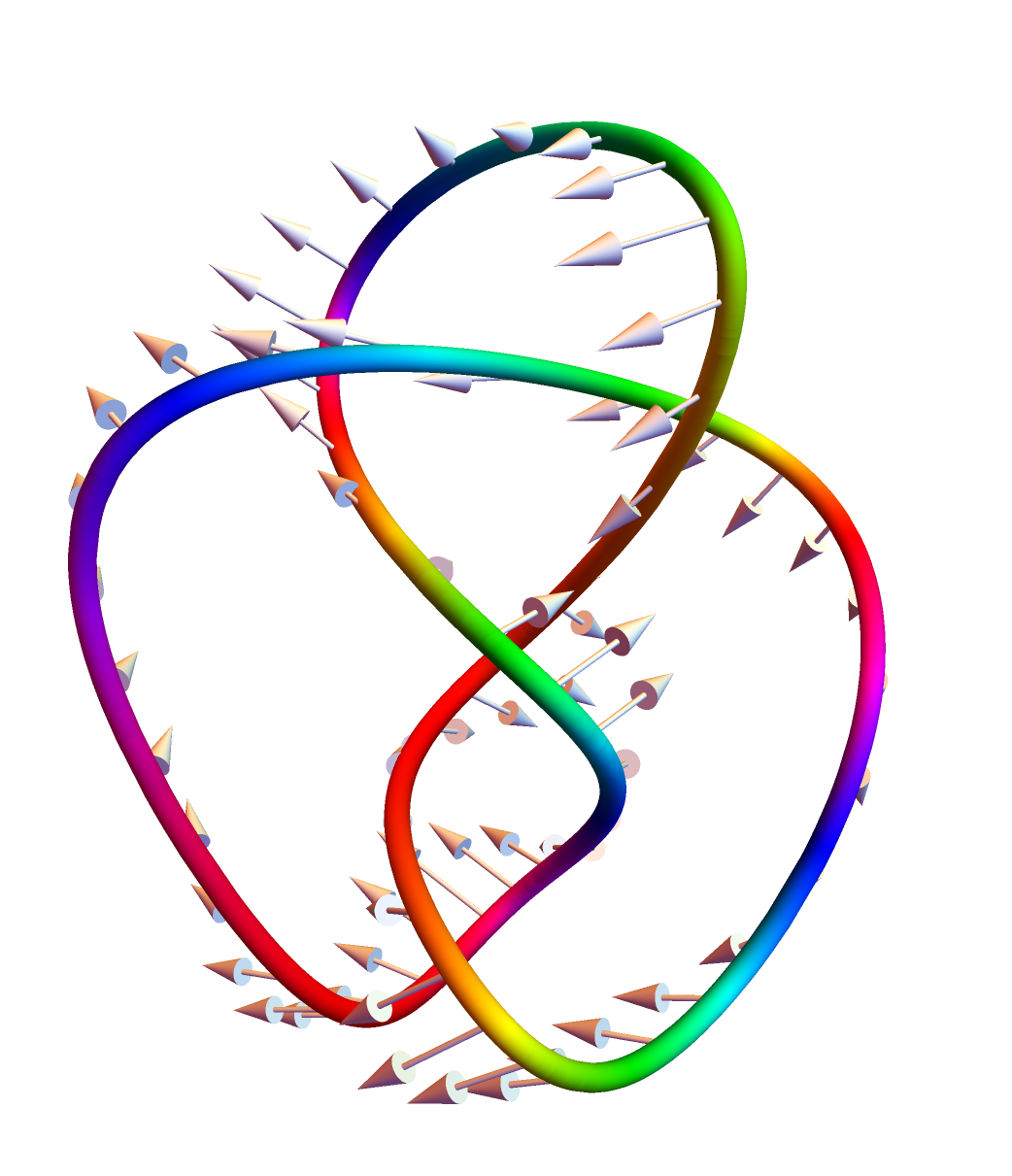}
\caption{Left: The initial Fourier knotted curve (left). Right: The arrows placed on the curve represent the Biot-Savart vector field $\mb{F}(\mb{X}, \Gamma_t), \mb{X}\in\Gamma_t$ given by (\ref{biot-savart-force}). }
\label{fig-fourier}

\end{center}
\end{figure}
As an example of a non-local source term $\mb{F}=\mb{F}_0$ we assume the external force corresponding to the Biot-Savart law. It represents the integrated influence of all points $\mb{X}$ that belong to the curve $\Gamma_t=\{ \mb{X}(s), s\in[0,L(\Gamma_t)]\}$ at a given point $\mb{X}\in\mathbb{R}^3$.
\begin{equation}
\mb{F}(\mb{X}, \Gamma_t) = \int_{\Gamma}
\frac{(\mb{X}-\mb{X}(s))\times \mb{T}(s)}{|\mb{X}-\mb{X}(s)|_\delta^3} ds .
\label{biot-savart-force}
\end{equation}
Following \cite{BKS2022}, we replaced the Euclidean distance between points $\mb{X}$ and $\mb{X}(s)$ by its regularization  $|\mb{X}-\mb{X}(s)|_\delta = (\delta^2 + |\mb{X}-\mb{X}(s)|^2)^\frac12$ where  $0<\delta\ll 1$ is a small regularization parameter. The vector field $\mb{F}$ corresponding to the regularized Biot-Savart force (\ref{biot-savart-force}) is shown in Fig.~\ref{fig-fourier} (right).

\begin{remark}\label{biot-savart}
Assume that the curve $\Gamma_t$ has zero torsion. Then the curve $\Gamma_t$ belongs to a plane. Let $\mb{X}\in\Gamma_t$. If $\mb{N}$ and  $\mb{T}$  are the unit normal and tangent vectors at $\mb{X}$, then the Biot-Savart force projected onto the normal and tangent vectors vanishes, that is, $\mb{F}(\mb{X}, \Gamma_t) \cdot \mb{N} = \mb{F}(\mb{X}, \Gamma_t) \cdot \mb{T} = 0$. In fact, if $\Gamma_t$ belongs to a plane, then the difference of the position vectors $\mb{X}-\mb{X}(s)$, the tangent vector $\mb{T}(s)$ in $\mb{X}(s)$ as well as the vectors $\mb{N}, \mb{T}$  belong to the same plane, and so $((\mb{X}-\mb{X}(s))\times \mb{T}(s))\cdot \mb{N} = ((\mb{X}-\mb{X}(s))\times \mb{T}(s))\cdot \mb{T} = 0$ for each $s\in[0,L(\Gamma_t)]$. 

For example, if the curve $\Gamma_t$ is a circle parameterized by $\mb{X}(s) = (\cos s, \sin s, 0)^T, s\in[0,2\pi]$, then it is easy to verify that $\mb{F}(\mb{X}, \Gamma_t) \cdot \mb{B} = \int_0^{2\pi} (1-\cos s) (\delta^2 + 2 (1-\cos s))^{-\frac32}$. Notice that $\mb{F}(\mb{X}, \Gamma_t) \cdot \mb{B} = \infty$ if $\delta=0$.
\end{remark}

\begin{remark}\label{biot-savart-linking}

If $\Gamma_t$ and $\tilde\Gamma_t$ are two non-intersecting curves in 3D then the Gauss linking integral of $\Gamma_t$ and $\tilde\Gamma_t$ can be defined as follows:
\begin{eqnarray*}
link(\Gamma_t,\tilde\Gamma_t ) &=& \frac {1}{4\pi }\oint_{\Gamma}\oint_{\tilde\Gamma}
\frac{det\left(\partial_s \mb{X}(s), \partial_{\tilde s} \tilde{\mb{X}}({\tilde s}), \mb{X}(s) - \tilde{\mb{X}}({\tilde s}) \right)}{|\mb{X}(s) - \mb{X}(\tilde{s})|^3} ds d\tilde{s}
\\
&=& 
- \frac {1}{4\pi }\oint_{\Gamma} \mb{F_0}(\mb{X}(s), \tilde{\Gamma_t}) \cdot \partial_s \mb{X}(s)  ds 
\end{eqnarray*}
where $\Gamma_t$ and $\tilde\Gamma_t$ are parameterized by $\mb{X}(s)$ and $\tilde{\mb{X}}$, respectively. The linking number $link(\Gamma_t,\tilde\Gamma_t )$ computes the total signed area of the image of the Gauss map divided by the area $4\pi$ of the unit sphere in 3D.
\end{remark}

\begin{figure}
\begin{center}
\includegraphics[width=0.25\textwidth]{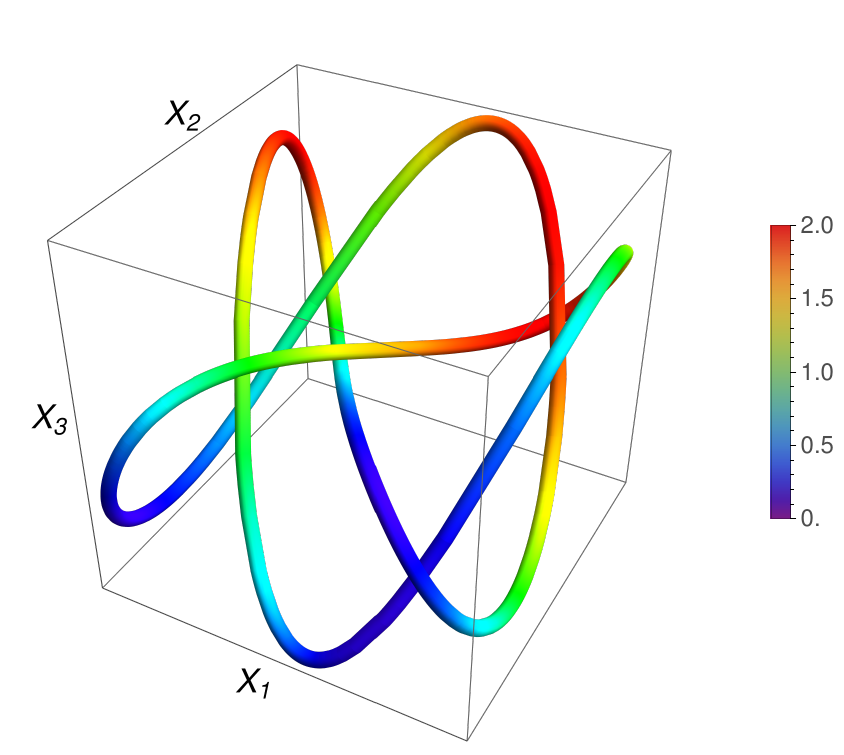}
\qquad \includegraphics[width=0.3 \textwidth]{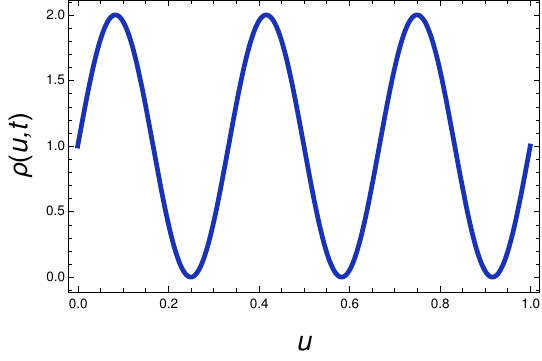}
\end{center}

\caption{The initial knotted Fourier curve $\mb{X}=(X_1, X_2, X_3)^T$ and the initial scalar quantity $\varrho(u, 0) = 1 +\sin(6\pi u)$.}
\label{fig-init}

\end{figure}

We consider the system of governing equation of the form:
\begin{equation}
\label{example-system-biot}
\begin{split}
 &\partial_t \mb{X} = \beta \mb{N}  + \gamma \mb{B}  +\mb{F}(\mb{X},\Gamma_t), \\
 & \partial_t \varrho - \kappa\beta\varrho = \partial_s^2\varrho .
\end{split}
\end{equation}
where $\beta = \kappa - \frac{2\pi}{L(\Gamma_t)}$. As an initial condition for the system (\ref{example-system-biot}) we consider the Fourier knotted curve shown in Fig.~\ref{fig-init} and the initial scalar quantity $\varrho(u,0) = 1 +\sin(6 \pi u)$.

\begin{figure}
\begin{center}
\includegraphics[width=0.33\textwidth]{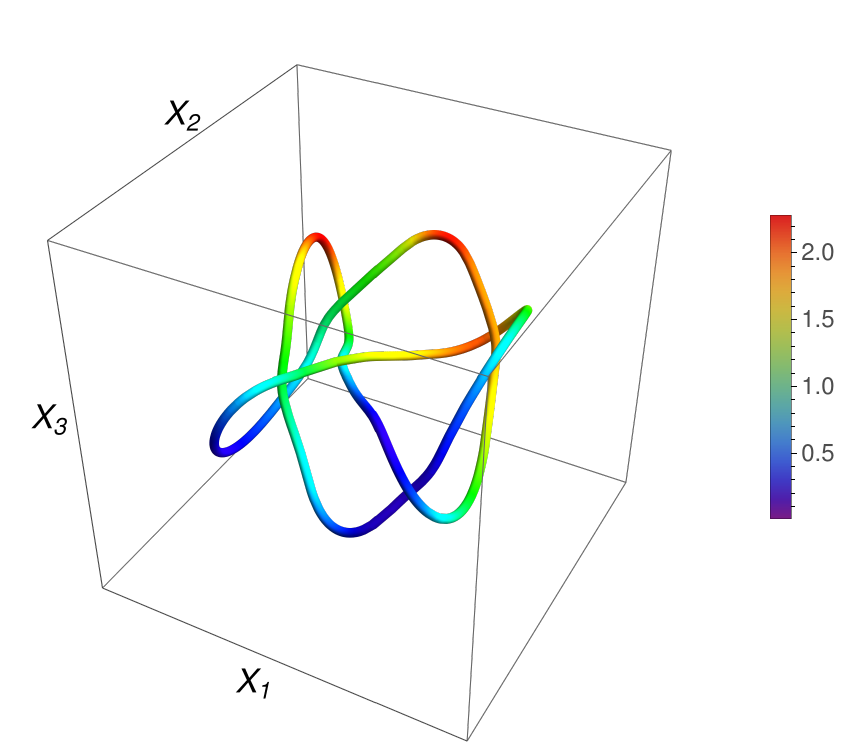}
\qquad \includegraphics[width=0.3\textwidth]{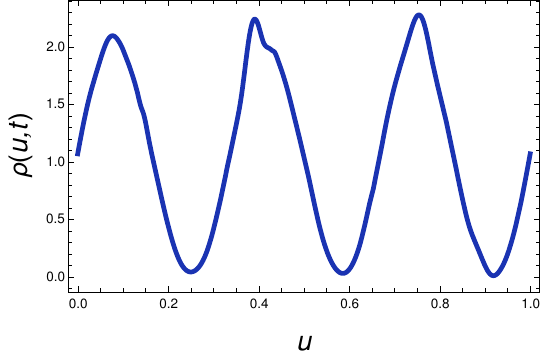}
\centerline{\scriptsize $t = 0.02$}

\includegraphics[width=0.33\textwidth]{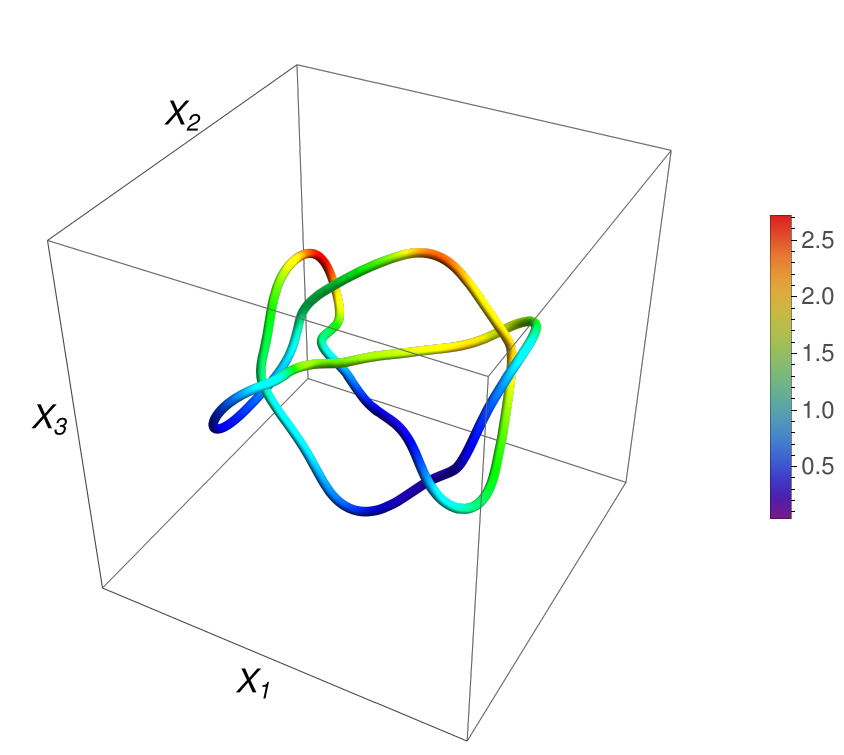}
\qquad \includegraphics[width=0.3\textwidth]{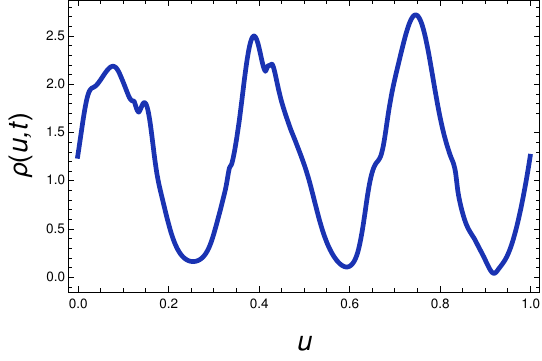}
\centerline{\scriptsize $t = 0.063$}

\includegraphics[width=0.33\textwidth]{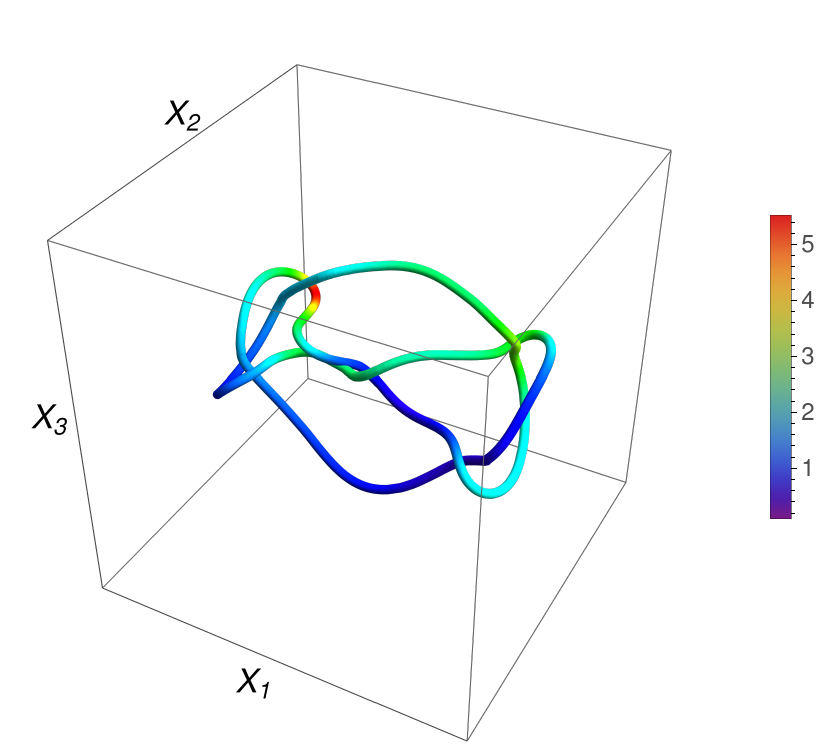}
\qquad \includegraphics[width=0.3\textwidth]{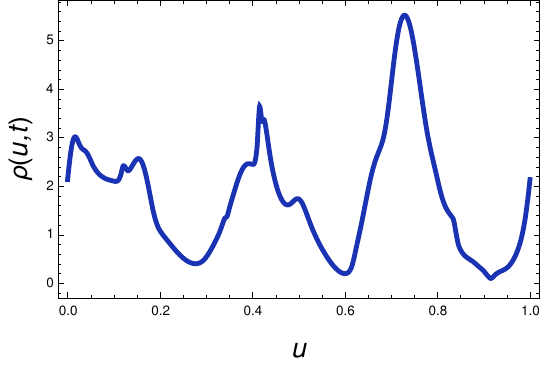} 
\centerline{\scriptsize $t = 0.124$}

\includegraphics[width=0.33\textwidth]{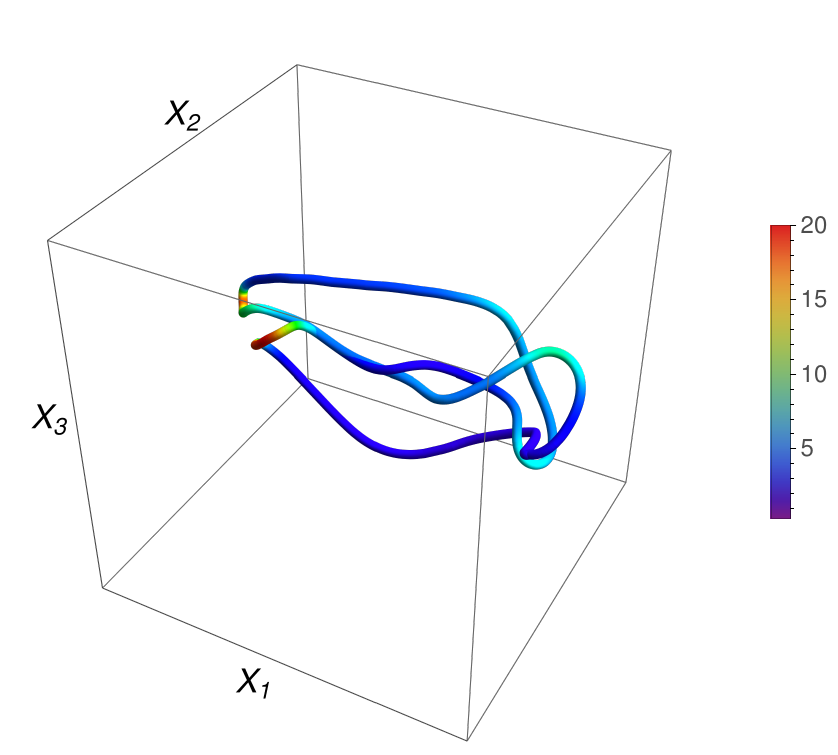}
\qquad 
\includegraphics[width=0.3\textwidth]{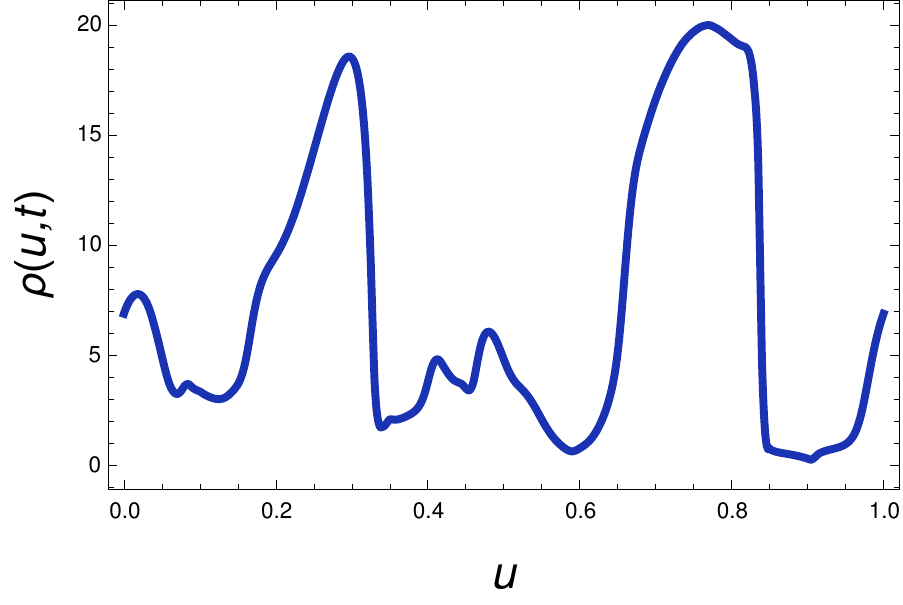} 
\centerline{\scriptsize $t = 0.237$}

\caption{Evolution of the initial knotted curve shown in Fig.~\ref{fig-init} with the velocity $\gamma=\varrho$ and the external force given by the regularized Biot-Savart force given by (\ref{biot-savart-force}).}

\label{fig-knot-BS}
\end{center}
\end{figure}

In the first experiment shown in Fig.~\ref{fig-knot-BS} we investigate the motion of the initial knotted curve (\ref{fig-init}) with the binormal velocity $\gamma=\varrho$ linearly depending on the scalar quantity $\varrho$. The external force given by the regularized Biot-Savart force (\ref{biot-savart-force}). 
The presence of the Biot-Savart force $\mb{F}$ ensures that the evolved curves preserve the topological structure of a knotted curve.

\begin{figure}
\begin{center}
\includegraphics[width=0.33\textwidth]{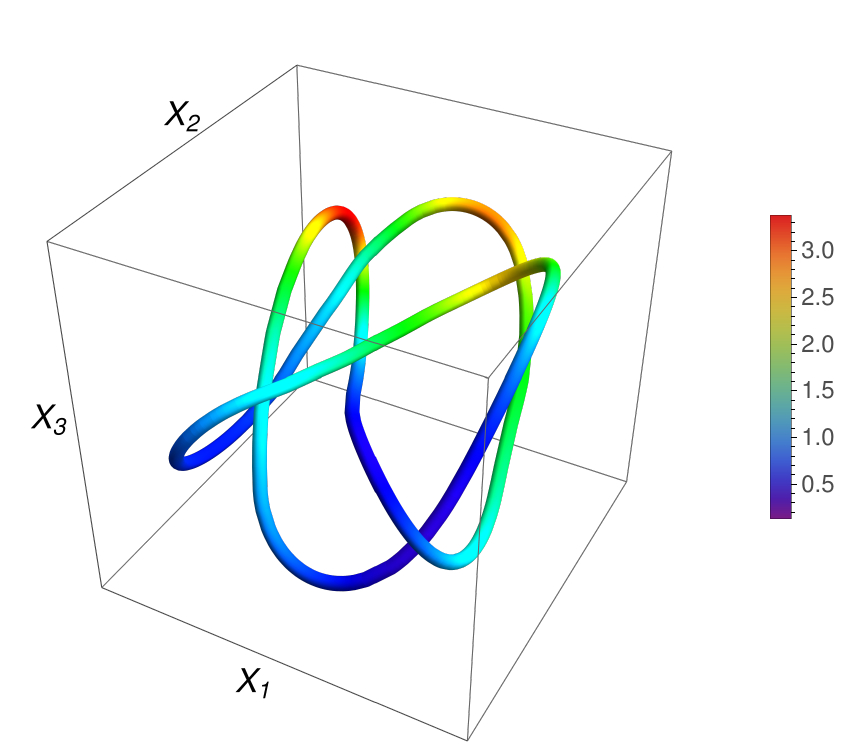}
\qquad \includegraphics[width=0.3\textwidth]{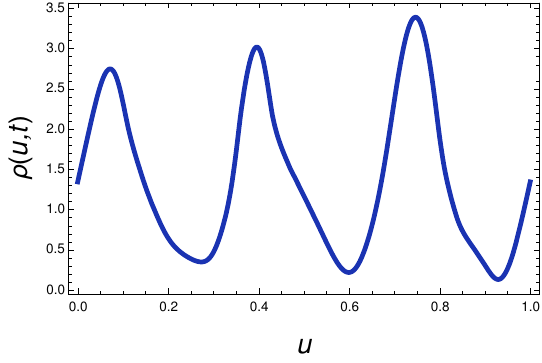}
\centerline{\scriptsize $t = 0.1225$}

\includegraphics[width=0.33\textwidth]{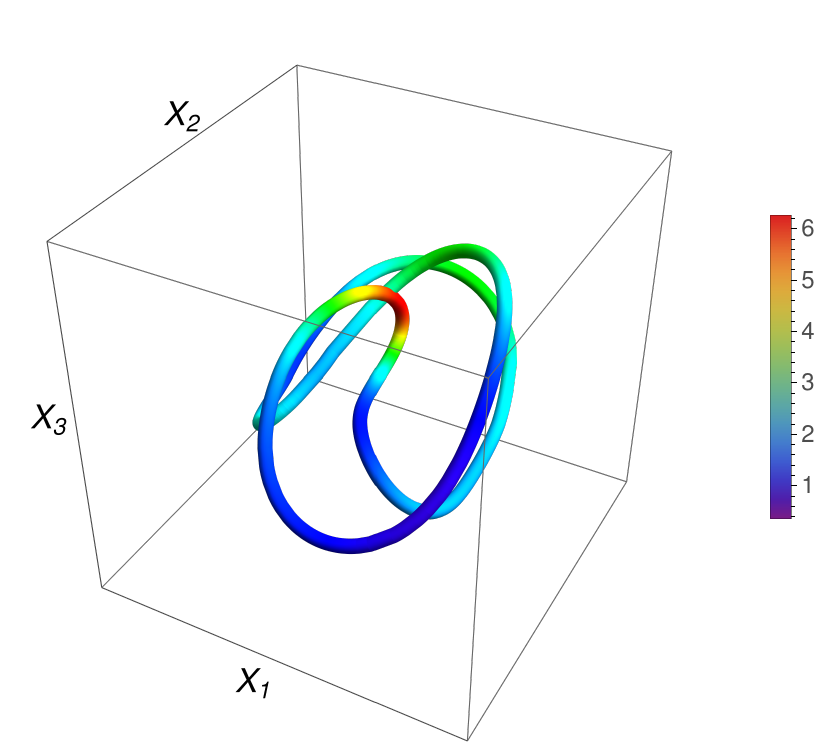}
\qquad \includegraphics[width=0.3\textwidth]{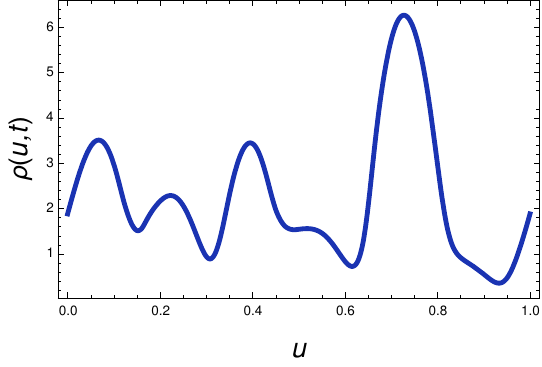}
\centerline{\scriptsize $t = 0.25$}

\includegraphics[width=0.33\textwidth]{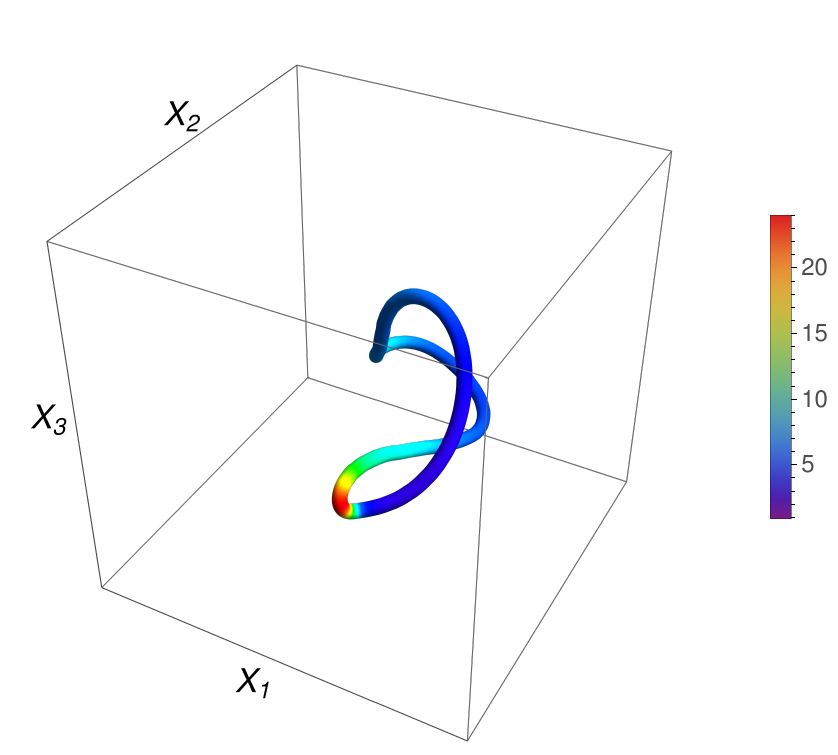}
\qquad \includegraphics[width=0.3\textwidth]{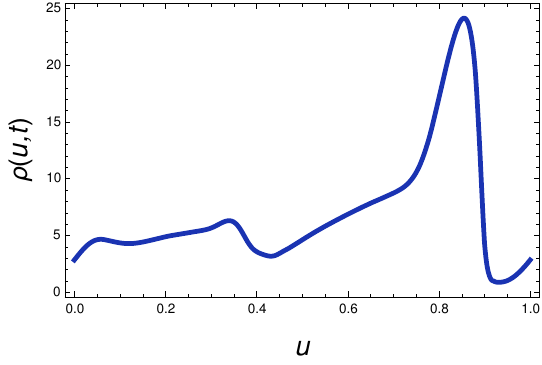}
\centerline{\scriptsize $t = 0.4$}

\caption{Evolution of the initial knotted curve with the binormal velocity $\gamma=\varrho$ and the vanishing force $\mb{F}=0$.}
\label{fig-knot}

\end{center}
\end{figure}

The third experiment is shown in Fig.~\ref{fig-knot}. The same initial knotted curve in Fig.~\ref{fig-init} is evolved by the binormal velocity $\gamma=\varrho$ and vanishing external force $\mb{F}=0$. In contrast to the experiments shown in Fig.~\ref{fig-knot-BS} the knotted structure is not preserved and the evolved curve approaches a circular curve in the plane. 

\section{Conclusions}
In this paper, we investigated the motion of closed smooth curves that evolve in $\mathbb{R}^3$ coupled with the evolution of a scalar quantity evaluated over the evolving curve. We proved the local existence and uniqueness of classical H\"older smooth solutions. The proof is based on the analysis of the position vector equation and the parabolic equation for the evolving scalar quantity. We also proposed a numerical discretization scheme for numerical approximation of the evolving family of curves and scalar quantity. We presented several numerical examples of evolving curves and the scalar quantity. 
\bigskip

\noindent{\bf Acknowledgments} 

\noindent The first and second authors received partial support from the project 25-18265S of the Czech Science Foundation.  The third author received support from the Slovak Research Agency  VEGA 1-0493-24.

\bigskip
\noindent{\bf Conflicts of Interest.}

\noindent This work does not have any conflicts of interest

\end{document}